\definecolor{colorJRblue}{rgb}{0.,0.,1.}
\definecolor{colorJRred}{rgb}{1.,0.,0.}
\definecolor{colorJRgreen}{rgb}{0.,0.6,0.}
\def\blue{}
\theoremstyle{definition}
\newtheorem{rem}{Remark}[section]
\theoremstyle{plain}  
\newtheorem{thm}[rem]{Theorem}
\newcommand{\R}{\mathbb{R}}
\newcommand{\Z}{\mathbb{Z}}
\newcommand{\C}{\mathbb{C}}
\newcommand{\N}{\mathbb{N}}
\newcommand{\T}{\mathbb{T}}
\def\v{\boldsymbol{v}} 
\def\vh{\boldsymbol{u}} 
\def\u{\boldsymbol{u}}
\def\w{\boldsymbol{w}}
\def\k{\boldsymbol{k}}
\def\e{\boldsymbol{e}}
\def\a{\boldsymbol{a}}
\def\x{\boldsymbol{x}}
\def\2{\boldsymbol{e_2}}
\def\3{\boldsymbol{e_3}}
\def\c{\boldsymbol{c}}
\def\p{\partial}
\def\b{\mathtt{b}}
\def\tb{\tilde{\b}}
\def\eps{\varepsilon}
\def\calL{\mathcal{L}}
\def\rmi{\mathrm{i}}
\def\d{\mathrm{d}}
\def\sc{k}
\def\diag{\mathrm{diag}}
\def\div{\operatorname{div}}
\def\Id{\mathrm{Id}}
\def\B{\boldsymbol{B}}
\def\Re{\mathrm{Re}}
\def\disp{D} 
\def\gradh{\nabla} 
\def\Laplaceh{\Delta} 
\def\divh{\nabla\cdot} 
\def\grad{{\nabla_3}} 
\def\Laplace{{\Delta_3}} 
\def\cref{\eqref}
\def\Cref{\S\ref}
\def\F{\boldsymbol{F}}
\def\lb{{C}}
\def\dif{\mathrm{d}}
\def\Lspace{L}
\def\Hspace{H}
\def\kx{k_x}
\def\ky{k_y}
\def\crit{{\rm c}}
\def\tee{\widetilde{\mathbf{e}}}
\def\E{\mathbf{E}}
\def\tE{\widetilde{\mathbf{E}}}
\def\rme{\mathrm{e}}
\def\kcx{k'_x}
\def\kcy{k'_y}
\def\ee{\mathbf{e}}
\def\calO{\mathcal O}
\def\calR{\mathcal R}
\def\s{{\rm s}}
\def\ns{{\rm ns}}
\def\zz{\chi}
\def\pd{\underaccent{\bar}{\partial}}
\def\nablad{\underaccent{\bar}{\nabla}}
\def\Deltad{\underaccent{\bar}{\Delta}}
\def\nabladh{\nablad} 
\def\Deltadh{\Deltad} 
\def\Bd{\underaccent{\bar}{\boldsymbol{B}}}
\def\Vd{\underaccent{\bar}{\boldsymbol{V}}}
\def\Fd{\underaccent{\bar}{\boldsymbol{F}}}
\title{Rotating convection {\blue and flows} with horizontal kinetic energy backscatter}
\author{Paul Holst\thanks{University of Bremen, Department 3 - Mathematics, 28359 Bremen, Germany
  ({pholst@uni-bremen.de}).}
\and Jens D.M. Rademacher\thanks{Universit\"at Hamburg, MIN faculty, Department of Mathematics, Germany ({jens.rademacher@uni-hamburg.de}).} 
\and Jichen Yang\thanks{Harbin Engineering University, School of Mathematical Sciences, 150001 Harbin, China
  ({jichen.yang@hrbeu.edu.cn}).}
}
\date{\today}
\begin{document}

\maketitle

\tableofcontents

\begin{abstract}
Numerical simulations of large scale geophysical flows typically require unphysically strong dissipation for numerical stability. Towards energetic balance various schemes have been devised to re-inject this energy, in particular by horizontal kinetic energy backscatter. In a set of papers, some of the authors have studied this scheme through its continuum formulation with momentum equations augmented by a backscatter operator, e.g. in rotating Boussinesq and shallow water equations. Here we review the main results about the impact of backscatter on certain flows and waves, including some barotropic, parallel and Kolmogorow flows, as well as internal gravity waves and geostrophic equilibria. We particularly focus on the possible accumulation of injected energy in explicit medium scale plane waves, which then grow exponentially and unboundedly, or yield bifurcations in the presence of bottom drag. Beyond the review, we introduce the rotating 2D Euler equations with backscatter as a guiding example. For this we prove the new result that unbounded growth is a stable phenomenon occurring in open sets of phase space. We also briefly consider the primitive equations with backscatter and outline global well-posedness results.
\end{abstract}

\section{Introduction}\label{s:intro}
Two particular challenges for simulating climate models and geophysical flows on large scales over long times are numerical instabilities and the necessarily coarse resolution. 
Such instabilities are typically suppressed by unphysically strong dissipation and viscosity, and so-called subgrid parameterizations are used to compensate unresolved processes. 
These include energy transfer from small to large scales, which has been consistently found in high resolution simulations of geophysical flows in addition to the forward dissipation cascade, e.g., \citep{juricke2020kinematic,DJKO2019}. See also \citep{AlexBifer2018}. 

This motivates to re-inject some portion of energy at large scales, and is implemented in so-called horizontal kinetic energy backscatter schemes, e.g., \citep{JH2014, ZuritaEtAl2015,KJC2018,JansenEtAl2019, DJKO2019,JDKO2019, juricke2020kinematic,Perezhogin20}. 
Numerical studies in the cited references find that these schemes appear to provide energy at the right place, and for instance successfully excite otherwise damped eddy activity. 

In order to gain insight into the impact of such schemes from a mathematical viewpoint, we view these as a modification of the fluid momentum equations on the continuum level, i.e., as partial differential equations. This admits to analytically study consequences of backscatter in a way that is currently inaccessible for the discrete form. Backscatter then essentially appears through a linear operator composed of negative Laplacian and bi-Laplacian, whose Fourier symbol with wavenumber $k$ is of the form
\begin{align}\label{e:dispBack}
\lambda = b k^2-d k^4, \; b,d\geq 0. 
\end{align}
This operator combines negative viscosity with stabilizing hyperviscosity and, for $d=b>0$, is familiar from the Kuramoto-Sivashinsky equation (KS). The one-dimensional KS famously admits a global attractor and is a paradigm for chaos in a partial differential equation, e.g.\ \citep{NST85, Kalogirou15}; in two and three space dimensions the study of KS has regained attention in recent years \citep{Kalogirou15,AM2019,AM2021,CZDFM2021,FM22,FSW22,KM2023}. These results suggest that backscatter can profoundly impact the dynamics. 

\medskip
The continuum perspective has been pursued in \citep{PRY22, PRY2023} and here we review the main results. In particular, we discuss families of explicit flows in which the re-injected energy can get trapped, leading to exponential and unbounded growth. And we review bifurcations that are induced by the presence of nonlinear bottom drag, which implies selections of geostrophic equilibria and shallow water gravity waves. 

Beyond this review, we introduce the rotating 2D Euler equations augmented with backscatter 
as a guideline, which also appears as a subsystem in all models, and where the unbounded growth is particularly transparent. Moreover, as a new result, we prove for this system that unbounded growth appears in open sets of phase space for moderate ratio of the backscatter coefficients $b/d$.  This can be interpreted as stability of unbounded growth from explicit solutions and we informally state it as follows. 

\begin{thm} \label{t:stab2DNSE}
Consider the 2D Euler equations with constant backscatter posed on the torus. For an open set of backscatter parameters there are stably unboundedly growing solutions $\u_{\ast}(t)$, i.e., any solution $\vh(t)$ with initial $\vh(0)$ sufficiently close to $\u_{\ast}(0)$ also grows unboundedly. Moreover, the growth of $\vh$ is exponential with rate arbitrarily close to that of $\u_{\ast}$.
\end{thm}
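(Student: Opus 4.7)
The plan is to use the exact exponentially growing plane-wave solutions as a reference and show that nearby initial data produce solutions tracking this reference. First I would select $\k_0 \in \Z^2$ with $0 < |\k_0|^2 < b/d$, close to the backscatter maximizer $b/(2d)$, so that $\lambda_0 := b|\k_0|^2 - d|\k_0|^4 > 0$ is close to the maximum growth rate $\lambda_{\max} = b^2/(4d)$. Then
\begin{equation*}
\u_\ast(t,\x) := A_0\,e^{\lambda_0 t}\,\k_0^\perp\sin(\k_0\cdot\x)
\end{equation*}
is an exact solution, because $(\u_\ast\cdot\nabla)\u_\ast \propto \k_0^\perp\cdot\k_0 = 0$ vanishes identically and $\u_\ast$ then satisfies the backscatter-forced linear equation diagonally in Fourier with the pressure reducing to a constant. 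This produces the stably unboundedly growing reference.

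Next I would analyze perturbations $\vh = \u_\ast + \w$, which satisfy
\begin{equation*}
\partial_t\w + (\u_\ast\cdot\nabla)\w + (\w\cdot\nabla)\u_\ast + (\w\cdot\nabla)\w + \nabla q = -b\Delta\w - d\Delta^2\w.
\end{equation*}
Since $\hat{\u}_\ast$ is supported only at $\pm\k_0$, the linearized operator preserves every Bloch class $[\k] := \{\k + n\k_0 : n \in \Z\}$; within each class, the amplitudes $\alpha_n(t) := \hat{w}(\k + n\k_0,t)$ satisfy an infinite tridiagonal nonautonomous ODE with diagonal $\lambda(\k + n\k_0)$ and off-diagonals proportional to $A_\ast(t) := A_0\,e^{\lambda_0 t}$. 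The class containing $\pm\k_0$ reproduces $\u_\ast$ and realizes Lyapunov exponent $\lambda_0$. To control the remaining classes I would rescale via $\w = A_\ast(t)\tilde\w$ and $d\tau = A_\ast(t)\,dt$, so that the leading-order dynamics for $\tilde\w$ as $t \to \infty$ becomes the full 2D Euler equation around the fixed Kolmogorov-type flow $\k_0^\perp\sin(\k_0\cdot\x)$, with backscatter corrections of size $O(A_\ast^{-1})$ that are integrable in $t$. On the torus and for an open set of $(b,d)$ with moderate $b/d$, an Arnold--Meshalkin--Sinai-type criterion yields spectrum of the limiting linearization on the imaginary axis, while the $-d|\k+n\k_0|^4$ hyperviscous damping confines each class to an effectively finite-dimensional window; together these would give a uniform Lyapunov bound $\lambda_0$ across all Bloch classes.

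The nonlinear conclusion follows by a Duhamel/bootstrap argument in $(\tilde\w,\tau)$: the nonlinearity $(\tilde\w\cdot\nabla)\tilde\w$ has bounded coefficient in the rescaled system, local well-posedness of the hyperviscous backscatter system in $H^s$ with $s > 2$ is standard, and propagating smallness of $\|\tilde\w(\tau)\|_{H^s}$ for all $\tau > 0$ would yield $\|\w(t)\|_{H^s} \leq C\|\w(0)\|_{H^s}\,e^{\lambda_0 t}/A_0$. Hence $\|\vh(t)\| \geq \|\u_\ast(t)\| - \|\w(t)\|$ still grows unboundedly and exponentially at a rate that is $\lambda_0$ up to a correction $O(\|\w(0)\|/A_0)$, hence arbitrarily close to $\lambda_0$ as the initial perturbation shrinks. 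The main obstacle will be the spectral analysis of the infinite tridiagonal nonautonomous Bloch systems with exponentially growing off-diagonals, and it is precisely here that the restriction to moderate $b/d$ enters, through the narrow width of the unstable wavenumber band and the openness of the Arnold--Meshalkin--Sinai-type stability condition; the Floquet-type analyses in the authors' companion papers provide the template.
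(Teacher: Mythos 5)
Your overall strategy --- linearize around the growing reference $\u_\ast$, decompose into Bloch classes generated by $\pm\k_0$, rescale time by the amplitude so that the limiting problem is the Euler linearization about a Kolmogorov flow --- is genuinely different from the paper's proof, but it has a gap at its central step that I do not think can be repaired as stated. The rescaling $\dif\tau = A_\ast(t)\,\dif t$ gives $\tau\sim e^{\lambda_0 t}$, so the backscatter terms, while pointwise of size $O(A_\ast^{-1})$ in the $\tau$-equation, accumulate as $\int O(A_\ast^{-1})\,\dif\tau=\int O(1)\,\dif t\to\infty$: they are not an integrable perturbation in the rescaled time, and indeed they must not be discarded, since they are exactly what produces the mode-dependent growth $e^{\lambda(\k)t}$ you need to track. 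Worse, even if the limiting Euler-around-Kolmogorov linearization had spectrum on the imaginary axis, that operator is non-normal and marginal spectrum permits algebraic growth in $\tau$; any growth like $\tau^m$ translates back to $e^{m\lambda_0 t}$, which destroys the claimed rate $\lambda_0+o(1)$. The Arnold--Meshalkin--Sinai input you invoke is also not available in the form you need: Arnold's criteria are degenerate for the $\sin$ shear on the square torus, and Meshalkin--Sinai concerns the viscous problem. Finally, you never engage with the resonance structure among the unstable modes, which is precisely where the restriction on $b/d$ enters in the paper; choosing $\k_0$ near the maximizer $b/(2d)$ generically leaves several unstable shells whose mutual interactions your Bloch-class argument does not control.

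For contrast, the paper's proof avoids linearizing around $\u_\ast$ altogether and is global in time. It restricts to $b\in(d,2d)$ so that only the lowest shell $|\k|=1$ is unstable and the four unstable modes $\e_1,\dots,\e_4$ have no resonant self-interaction, $\langle B(\e_j,\e_k),\e_\ell\rangle_2=0$. Step one combines the $L_2$ and $H^1$ energy identities for $\w=\u-\e_1$ in the manner of Constantin--Foias--Temam, using the 2D identity $\langle B(\u,\u),A\u\rangle_2=0$ to eliminate all nonlinear contributions, and Gronwall then gives exponential decay of the small-scale part $\u'$ at rate $-2(2d-b)$. Step two projects the equation onto $\operatorname{lin}\{\e_1,\dots,\e_4\}$, where the only surviving nonlinear terms involve the decaying $\u'$, and a differential inequality yields the exponential lower bound $\|\overline{\u}\|_2\gtrsim e^{(1-2\varepsilon)(b-d)t}$. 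If you want to pursue your route, you would need a quantitative, uniform-in-time bound on the fundamental solution of the nonautonomous Bloch systems with exponentially growing off-diagonal coupling --- this is the hard analytic content, and the rescaling heuristic does not supply it.
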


We also briefly consider the primitive equations with backscatter and, as a study of backscatter from another viewpoint, we outline well-posedness results that will appear in full detail elsewhere. Indeed, backscatter with small $b,d$ can be viewed as a particular regularization of the inviscid equations. Notably, the occurrence of unbounded growth precludes a compact global attractor and the well-posedness problem differs from that of the Navier-Stokes or primitive equations with viscosity. We note that the differentiated KS equation has the form of the momentum equation of Euler augmented by backscatter, but without pressure. However, it concerns only curl-free velocities of the form $\v=\nabla u$, and the known unboundedly growing solutions all have non-zero curl. It also turns out that the incompressibility helps for obtaining well-posedness. 
{\blue We refer to such 3D or shallow water flows as convective in the sense of fluid motion driven by density differences or just gravity,
 not in the sense of atmospheric convection.} 

\bigskip
This chapter is organized as follows. In \S\ref{s:modeling} we briefly describe the dynamic and kinematic backscatter schemes and the corresponding modification of the continuum form for several PDE. We also discuss the spectrum for the resulting modified linear rotating shallow water equations in some detail. 
Section \ref{s:PlaWave} concerns the impact of backscatter on certain waves and flows, and the occurrence of unbounded growth. In \S\ref{s:stabgrow} we prove the stability of unbounded growth for the 2D Euler equation with backscatter. Section \ref{s:bottom} is devoted to review the bifurcations induced by backscatter combined with linear and nonlinear bottom drag in the rotating shallow water equations. The chapter ends with a discussion and outlook in \S\ref{s:discussion}.

\section{Backscatter modeling}\label{s:modeling}

Backscatter is practically used in a variety of models. A rather complex case is its implementation in the 
primitive equations ocean model, for which the modified horizontal momentum equation takes the form
\begin{align}\label{e:primmom}
\pd_t \vh + (\vh \cdot \nabladh)\vh + w \pd_z \vh + f \vh^\perp + 
\tfrac{1}{\rho_0}\nabladh p     
= \Bd(\vh) + \pd_z(A_\nu \pd_z\vh).
\end{align} 
with 2D horizontal velocity field $\vh = (u,v)$, vertical velocity $w$, vertical coordinate $z$, Coriolis parameter $f$,  reference density $\rho_0>0$, and discretized (indicated by underbar) partial derivative $\pd_z$ as well as discretized horizontal gradient $\nabladh := (\pd_x,\pd_y)$. 
The specific discretization of the differential operators is not relevant for us; we refer to, e.g., \citep{JDKO2019}. The vertical viscosity coefficient $A_\nu$ is specified by a vertical mixing parametrization  that we do not further discuss here. A combined viscosity-backscatter parameterization is included through $\Bd(\vh)$. In \eqref{e:primmom} and the following, for 2D vectors $(a_1,a_2)$ we set $(a_1,a_2)^\perp = (-a_2,a_1)$. 

As outlined before, numerical stability requires unphysically strong viscosity, so that, torwards energy consistency,  a source of energy is required. Kinetic energy backscatter aims to re-inject the energy that is excessively dissipated, but on a larger spatial scale and at a controlled rate. 
In all cases $\Bd$ (and $A_\nu)$ essentially scale with a grid scale measure so that these formally vanish in a naive continuum limit. However, practically used discretizations for climate or ocean simulations are coarse and relevant subgrid dynamics needs to be parameterized.

\medskip 
Given a chosen viscosity parameterization $\Vd(\vh)$ it is natural to introduce backscatter through an additional term $\Bd_b(\vh)$ via
\[
\Bd(\vh) = \Vd(\vh) + \Bd_b(\vh).
\] 
For numerical reasons independent of backscatter, a common choice of $\Vd$ has a grid scale dependent biharmonic or hyperviscous nature, e.g. \citep{JDKO2019,KJC2018}. Then backscatter can utilize negative viscosity as in 
\[
\Bd_b(\vh) = -\nu_b \Deltadh \vh,
\] 
where $\Deltadh:=\pd_x^2+\pd_y^2$ is the discretized horizontal Laplace operator and may also include a local averaging/smoothing filter. The backscatter coefficients are chosen in relation to the amount of unresolved or overdissipated kinetic energy. 
We briefly outline two approaches and refer to, e.g., \citep{JDKO2019,juricke2020kinematic} for more detailed discussions. In so-called \emph{dynamic backscatter} an unresolved subgrid energy $e$ is introduced that determines $\nu_b$ in the form
\[
\nu_b = C \sqrt{\max(e,0)}\geq 0,
\]
with constant $C$, and $e$ is itself determined via 
\[
\pd_t e = - c_{\rm dis} \dot E_d - \dot E_b - \nablad\cdot(\nu^C \nablad e). 
\]
Here $\dot E_d$ denotes the resolved energy dissipation rate, $c_{\rm dis}$ a constant factor, $\dot E_b$ denotes the injection rate of unresolved kinetic energy to the resolved flow and $\nu^C$ realizes grid scale dependent diffusion of unresolved kinetic energy. 
Different choices for $\dot E_d$ are used, inspired by a continuum form of $\Vd$ and its energy dissipation. Similarly, $\dot E_b$ is determined as a sink of unresolved kinetic energy in terms of $\Bd_b$. However, some averaging or filtering may be needed. In this way, dynamic backscatter extends the momentum equations by a subgrid energy equation that is fully coupled to it. 

\medskip
The simpler \emph{kinematic backscatter} scheme has been proposed in \citep{juricke2020kinematic} in order to reduce computational load, and also to assess the requirements for and impact of backscatter further. In this scheme the specific choice of $\Bd$ combines viscous dissipation and backscatter. It takes the form
\[
\Bd = \Vd_d-\tilde\alpha \Fd \Vd_d,
\]
where $\Vd_d$ is a discrete harmonic viscous operator, and $\Fd$ a near-local filter operator. Here $\Vd_d$, $\tilde\alpha$ and $\Fd$ are chosen such that $-\tilde\alpha \Fd \Vd_d$ acts as an anti-dissipative term and $\Vd_d$ takes the role of the biharmonic dissipation $\Vd$ in dynamic backscatter. This exploits the discrete nature of the operators, where the dispersion relations include higher order terms than the continuum differential operators. This can be illustrated explicitly in 1D upon expanding the dispersion relation for $\Vd_d = \nablad_+\nablad_-$ and 
$\Fd = \nablad_+^2+\nablad_-^2$ with left and right finite difference operators $\nablad_\pm$, cf. \citep{juricke2020kinematic}. 
In particular, a straightforward computation shows that a positive quadratic term occurs for $\tilde\alpha>1/4$, which thus forms a threshold for anti-viscous energy injection. At next order a quartic term with coefficient $d$ corresponds to biharmonic hyperviscosity. To leading order we thus obtain the growth rates through the Fourier symbol 
\eqref{e:dispBack}, 
where $b=\tilde\alpha-1/4$ and the values of $k$ relate to the grid scale. In this computation $\tilde\alpha$ is tacitly taken constant and we note that artificially fixing $\nu_b$ in dynamic backscatter also gives \eqref{e:dispBack}. 
{\blue We refer to \citep{Grooms} for a discussion backscatter parameterizations  for the quasi-geostrophic and primitive equations, which also considers backscatter terms in the buoyancy equation.} 

\subsection{PDE with backscatter}\label{s:PDE}

Motivated by the discrete modeling, we consider continuum formulations that admit an analysis and thus help understand the impact of backscatter parameterization. 
The natural continuum model for $\Bd$ in terms of the usual continuum horizontal gradient and Laplace operators $\nabla:= (\p_x,\p_y)$, $\Delta:= \p_x^2+\p_y^2$, is then an operator for the horizontal momentum equation given by
\[
\B =  -(b\Delta+d\Delta^2).
\]
We refer to $b,d>0$ as backscatter parameters, and note these are constant in the simplest situation. 
When posed on $\R^2$, Fourier transforming $\x$ with wave vector $\k\in\R^2$ and {\blue Euclidean} norm $k=|\k|$ then gives the transformed $\B$ as
\[
\hat \B = b k^2-d k^4.
\]
Hence, its spectrum and that of $\B$ (on e.g. $L_2$-based spaces) is directly given by the Fourier symbol in \eqref{e:dispBack}, but for $k\in\R$. 
We also consider the spatial domain $\T^2=\R^2/2\pi\Z^2$, i.e. a square domain with periodic boundary conditions rescaled to sides of length $2\pi$. In this case the spectrum is determined by the discrete subset $k\in \{\sqrt{n^2+m^2}: n,m\in\N_0\}$ since $\exp(\rmi \k\cdot\x)$ is periodic on $\T^2$ exactly for $\k\in \Z^2$. See Figure~\ref{f:specback}. 
Hence, the spectrum on $\R^2$ is unstable for $b>0$, and on $\T^2$ if $b>d$. Moreover, on $\R^2$ there is a band of unstable spectrum from $k\in (0,\sqrt{b/d})$ while on $\T^2$ there is, e.g., a single unstable eigenvalue if $b\in (d,2d)$. The latter follows since the admissible values of $k$ in increasing order are $0,1,\sqrt{2}, \ldots$ and $k=\sqrt{2}$ in \eqref{e:dispBack} gives $\lambda = 2b-4d$.

\begin{figure}[t!]
\centering
\subfigure[]{\includegraphics[trim = 5.5cm 8.5cm 6cm 8.8cm clip, height=4.5cm]{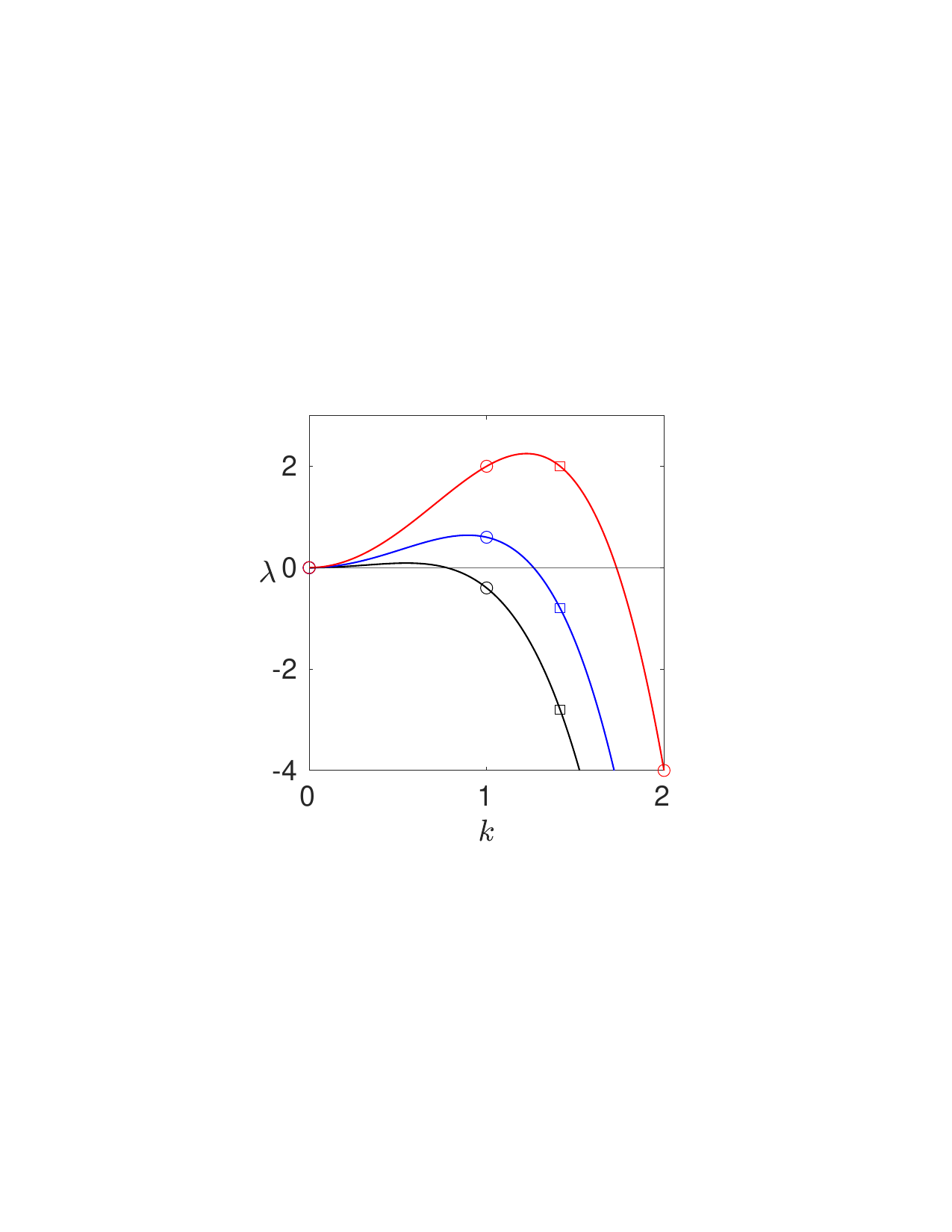}}
\hfil
\subfigure[]{\includegraphics[trim = 5.5cm 8.5cm 6cm 8.8cm, clip, height=4.5cm]{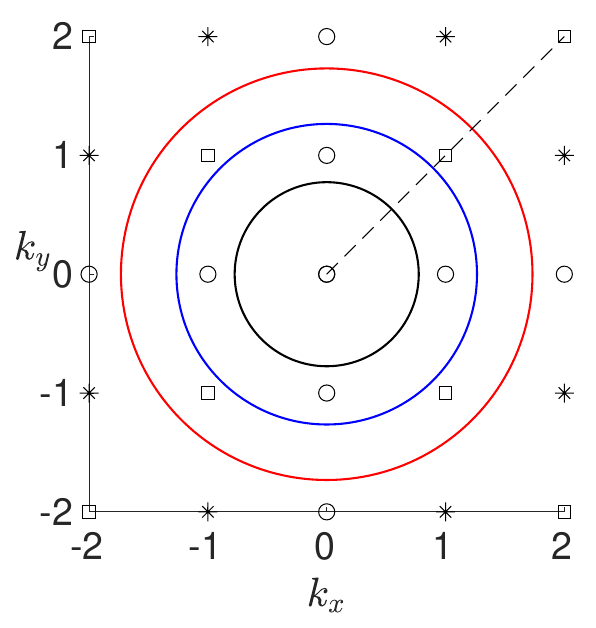}}
\caption{Samples of spectra of the backscatter operator $\B$ with wave vector $\k=(k_x,k_y)\in\R^2$ in (b), wave length $k=|\k|$ ({\blue Euclidean} norm) and growth rate $\lambda$ in (a). Parameters are $d=1$ and $b=0.6$ (black), $b=1.6$ (blue) and $b=3$ (red). Spectra of $\B$ for domain $\R^2$ are curves in (a) and disks in (b), and for domain $\T^2$ points with wave vectors on the axes (circles) and in $(1,1)$-direction (squares).  The unstable spectrum ($\lambda>0$) in (b) is located inside the disks with the respective radii $\sqrt{b/d}$, except for the neutrally stable spectrum at the origin for any choice of parameters.}
\label{f:specback}
\end{figure}

\medskip
We start with an illustrating example of a resulting PDE that is mainly used for mathematical purposes: the rotating 2D Euler equations with isotropic backscatter for 2D horizontal velocity field $\vh=(u,v)$ 
given by 
\begin{subequations}\label{eq:2DNS}
\begin{align}
\frac{\p\vh}{\p t}+(\vh\cdot\nabla)\vh+f\vh^\perp+\nabla p &= -(b\Delta+d\Delta^2) \vh\label{eq:2DNSa}\\
\nabla\cdot\vh&= 0,\label{eq:2DNSb}
\end{align}
\end{subequations}
on a horizontal domain $\x = (x,y) \in \Omega_h$, where we consider $\Omega_h=\R^2$ or $\Omega_h=\T^2$. 

An immediate impact of backscatter is revealed by the energy equation, obtained by testing \eqref{eq:2DNSa} with $\vh$, which reads
\begin{align}\label{e:2DNS_en}
\frac 1 2 \frac{\d}{\d t} \|\vh\|_2^2 = b\|\nabla \vh\|_2^2 - d\|\Delta \vh\|_2^2,
\end{align}
and which foreshadows the possible persistent energy injection that will appear in \S\ref{s:PlaWave}, \S\ref{s:stabgrow}. 
The role of scales in this process is more transparent when studying the spectrum of the linear terms of the system, i.e., dropping the transport nonlinearity, which coincides with the linearization in the trivial flow and thus determines its spectral and linear stability. For $f=0$ projecting \eqref{eq:2DNSa} onto divergence free vector fields with the Leray projection $P$ (cf.\ \S\ref{s:stabgrow}), gives the linear equation $\p_t \vh=\B\vh$, which implies the spectral and stability properties noted above. 
For $f\neq 0$ these properties are in fact the same, only eigenvectors may differ. This can be seen from the Fourier transform of the resulting linear operator $L$, which involves that of the Leray projection and is given by 
\begin{align}\label{e:Lin2DNS}
\hat L= \hat \B - f \hat P\begin{pmatrix}0&-1\\1&0\end{pmatrix}, \quad 
\hat P = \mathrm{Id} - \frac{1}{\kx^2+ \ky^2}\begin{pmatrix}\ky^2 & -\kx \ky\\ -\kx \ky & \kx^2\end{pmatrix}
\end{align}
where $\k=(\kx,\ky)\in\R^2\setminus\{0\}$. Since $\hat L-\hat \B$ has zero trace and determinant, $\hat L$ and $\hat \B$ have the same trace and determinant, and thus the same eigenvalues. Hence, $f$ only impacts the eigenvectors, but those in the kernel of $\hat L-\hat \B$ are $f$-independent, in particular for $\kx=0$ or $\ky=0$. See \S\ref{s:PlaWave}.

\medskip
A rather complex model from the geophysical application background are the 3D \emph{primitive equations} of the ocean in an $f$-plane approximation. Augmented with backscatter these are then given by
\begin{subequations}\label{eq: primEq}
\begin{align}
\frac{\p\vh}{\p t} + (\vh \cdot \gradh)\vh + w \frac{\p\vh}{\p z} + f \vh^\perp + \tfrac{1}{\rho_0}\gradh p     
&= 
\B \,\vh + 
\nu_v \frac{\p^2\vh}{\p z^2} + \boldsymbol{f}_h,  \\ 
\frac{\p T}{\p t} + \v \cdot \grad T &= \mu \Laplace T + f_3,  \\
\grad\cdot\v &=0,  \\
\frac{\p p}{\p z} &= - \rho g,  \\
\rho &= \rho_0(1 + aT), 
\end{align}
\end{subequations}
with three-dimensional velocity field $\v=(\vh,w)$ for $t \geq 0$, $\x \in\Omega_h \times [-H,0]$ with ocean depth $H>0$, the 3D gradient and Laplace operators $\grad:=(\p_x,\p_y,\p_z)$ and $\Laplace:=\p_x^2+\p_y^2+\p_z^2$, the pressure, temperature and density fields $p(t,\x)$, $T(t,\x)$, $\rho(t,\x) \in \R$, the vertical viscosity $\nu_v \geq 0$ as well as thermal diffusivity $\mu \geq 0 $, some potential forcing terms $\boldsymbol{f}_h(t,\x) \in \R^2$, $f_3(t,\x) \in \R$, and 
a constitutive parameter $a \in \R$. 
Possible boundary conditions for \eqref{eq: primEq}, beyond horizontal periodicity of $\v,p,T$ in case $\Omega_h=\T^2$, are 
\begin{subequations}\label{eq: primEqBC}
\begin{align}
&\tfrac{\p \vh}{\p z}|_{z = 0} = \boldsymbol{\tau}, && \tfrac{\p T}{\p z}|_{z = 0} = -\gamma T|_{z = 0} + T^{\ast}, && w|_{z = 0} = 0, \\
&\tfrac{\p \vh}{\p z}|_{z = -H} = \beta \vh|_{z = -H}, && \tfrac{\p T}{\p z}|_{z = -H} = 0, && w|_{z = -H} = 0. 
\end{align}
\end{subequations}
with possibly non-constant wind stress $\boldsymbol{\tau}$ and constants $\gamma,T^{\ast}, \beta$. 
Often $\beta=0$, e.g., \citep{CT07}, but also $\beta>0$ is used \citep[eq.\ (2.45)]{OWEBook}. 

It turns out that it is insightful to allow the backscatter to be anisotropic in the form
\[
\B = -\diag(d_1\Laplaceh^2+b_1\Laplaceh, d_2\Laplaceh^2+b_2\Laplaceh),
\]
where $d_j >0$ and $b_j>0$, $j=1,2$ can (slightly) differ, respectively. The motivation for this is to account for an effective anisotropy of the numerical backscatter scheme due to the combination of an anisotropic grid, spatially anisotropic coefficients and the application of filtering, see e.g.\ \citep{DJKO2019}. 
We may also include space-time dependence of the backscatter parameters, which mostly concerns $b_1,b_2$.

\medskip
We present two more PDE for which backscatter induced selection and growth phenomena will be reported in \S\ref{s:PlaWave} and \S\ref{s:bottom}. The \emph{rotating Boussinesq equations} forms another rather complex standard geophysical model without hydrostatic approximation. Augmented with horizontal backscatter operator $\B$ and using a scalar buoyancy variable $\b$ these read
\begin{subequations}\label{eq: introB}
\begin{align}
\frac{\p\v}{\p t}+(\v\cdot\grad)\v+f\3\times\v+\grad p-\3 \b &= -\diag(d_1\Laplace+b_1,d_2\Laplace+b_2,-\nu_v)\Laplace \v\label{eq: introBa} \\
\grad\cdot\v&= 0\label{eq: introBb}\\
\frac{\p \b}{\p t}+\v\cdot\grad\b+N^2w &= \mu\Laplace \b\,,\label{eq: introBc}
\end{align}
\end{subequations}
with velocity field $\v(t,\x)\in\R^3$ for $t\geq 0$, $\x\in\Omega_h\times I$ for an interval $I$, vertical unit vector $\3=(0,0,1)$. Boundary conditions can be analogous to the primitive equations, and we will also consider the full space with $I=\R$ or a periodic box. As usual, the buoyancy considered here is of the form $\b(t,\x)=-g(\rho(t,\x)-\overline{\rho}(z))/\rho_0\in\R$ with  reference density field $\overline{\rho}(z)$ depending on the vertical space direction $z$ only. 
Then $N^2=-(g/\rho_0)\dif\overline{\rho}/\dif z$ is the Brunt-V\"ais\"al\"a frequency for stable stratification $\dif\overline{\rho}/\dif z<0$. 


\medskip
An intermediate complexity model from the geophysical application background are the \emph{rotating shallow water equations}. In addition to backscatter we add a bottom drag term $\F$ as a representative additional dissipation mechanism so that the equations take the form
\begin{subequations}\label{e:sw}
\begin{align}
\frac{\partial\vh}{\partial t} + (\vh\cdot\nabla)\vh +f\vh^{\perp} + g\nabla\eta  &= -\diag(d_1\Laplaceh + b_1, d_2\Laplaceh +b_2)\Laplaceh\vh
+ \F(\vh,\eta),\label{e:swa}
\\
\frac{\partial\eta}{\partial t} + (\vh\cdot\nabla)\eta 
 & = -(H_0+\eta)\nabla\cdot\vh, \label{e:swb}
\end{align}
\end{subequations}
where $\vh=(u,v)$, $\vh(t,\x)\in\R^2$ is the horizontal velocity field on $\x = (x,y) \in\Omega_h$ at time $t\geq 0$, $\eta=\eta(t,\x)\in\R$ is the deviation of the fluid layer from the mean fluid depth $H_0>0$, so $\eta$ has zero mean and the thickness of the fluid is $H = H_0+\eta$ (with flat bottom); $g>0$ is the gravity acceleration.

In practice, {\blue instead of 3D PDE models, sometimes a few vertically} coupled layers are used, each a shallow water equation.
{\blue Indeed, while in discretized models the layer thickness is usually smaller than the horizontal mesh point distance, their ratio should be $f/N$, so that vertical discretization is often effectively more coarse.} 
 In that case, bottom drag is only present in the lowest layer.  For the bottom drag, we consider the form 
\begin{equation}\label{e:drag}
\F(\vh,\eta) = - \frac {C + Q|\vh|} {H_0+\eta}\vh, \quad |\vh| = \sqrt{u^2 + v^2}, 
\end{equation}
where $C\geq 0$ controls the dissipation that is linear in the velocity, and $Q\geq 0$ the quadratic part, cf.\ e.g.\ \citep{AS2008}; quadratic drag combined with dynamic backscatter is used in \citep{KJC2018,DJKO2019}. 

We point out that mass is conserved independent of backscatter since \cref{e:swb} is unaffected, which justifies the assumed zero mean of $\eta$. In general the presence of a conservation law can impact the dynamics at onset of instability. In a fluid-type context, this has been studied in the presence of a reflection symmetry or Galilean invariance in e.g.\  \citep{MC2000,MC2000b}, but in \cref{e:sw} the gradient terms and $\F$ or $f\neq 0$, break these structures.

We remark that backscatter can also be viewed as a (non-standard) viscous regularization of the momentum equations, which might lead to interesting selection of weak solutions to, e.g., the Euler equations. We do not pursue this further here.

\subsubsection{Spectra for augmented rotating shallow water model}\label{s:spec} 
As for the 2D NSE, it is instructive to study the impact of backscatter on the spectrum through the linearization of the PDE models. Here we present some details from \citep{PRY2023} for \cref{e:sw} linearized in the trivial steady flow $(\vh,\eta) = (0,0)$, i.e. simply dropping the nonlinear terms. This gives the linear operator
\begin{equation}\label{e:linop}
\calL =  
\begin{pmatrix}
- d_1\Delta^2 - b_1\Delta - C/H_0 & f & -g\partial_x  \\
-f & - d_2\Delta^2 - b_2\Delta - C/H_0 & -g\partial_y \\
-H_0\partial_x & -H_0\partial_y & 0
\end{pmatrix}.
\end{equation}
The spectrum of $\calL$ with spatial domain $\R^2$ (realized e.g.\ on $\Hspace^4(\R^2)\times\Hspace^4(\R^2)\times\Hspace^1(\R^2) \subset (\Lspace_2(\R^2))^3$) consists of the roots $\lambda$ of the dispersion relation 
\begin{equation}\label{e:disp}
\disp(\lambda,\k) := \det(\lambda\Id - \hat\calL) = \lambda^3 + a_1 \lambda^2 + a_2 \lambda + a_3 = 0,
\end{equation}
where $\hat\calL$ is the Fourier transform of $\calL$, $\k = (\kx,\ky)\in\R^2$ are the wave vectors,  and 
\begin{align*}
a_1 \ &:=\ (d_1 + d_2) |\k|^4 - (b_1 + b_2) |\k|^2 + 2C/H_0, \\
a_2 \ &:=\ (d_1 |\k|^4 - b_1 |\k|^2 + C/H_0) (d_2 |\k|^4 - b_2 |\k|^2 + C/H_0) + g H_0 |\k|^2 + f^2, \\
a_3 \ &:=\  g H_0 |\k|^2 \left((d_1|\k|^2-b_1)k_y^2+(d_2|\k|^2-b_2)k_x^2 + C/H_0 \right).
\end{align*}
We write the natural continuous selections of solutions as $\lambda(\k)$. 
If the spatial domain is $\T^2$, then as for \eqref{eq:2DNS} the spectrum (e.g.\ on the corresponding spaces) is determined from the discrete subset of $\k$ where $\exp(\rmi \k\x)$ is $2\pi$-periodic. 
The sign of $\Re(\lambda)$ for the solutions to \cref{e:disp} determines the spectral stability of $(\vh,\eta)=(0,0)$ with respect to perturbations with wave vector $\k$. 

Concerning the structure of the spectrum, on the one hand, $\lambda=0$ is always solution at $\k=0$ with  eigenmode/-function $(\vh,\eta) =(0,1)$, which modifies the total fluid mass and shifts within the family of trivial steady flows $(\vh,\eta)\equiv(0,h)$, $h\in\R$. 
Hence, this mode is absent when imposing fixed mean fluid depth $H_0$, i.e. $\eta$ with zero mean. 
On the other hand, the hyperviscous terms for $d_1d_2\neq 0$ generate solutions $\lambda_\infty(\k)$ with $\Re(\lambda_\infty(\k))\to 0$ as $|\k|\to \infty$. See Figure \ref{f:specinf}. Indeed, setting $\k=r^{-1}\bar\k$ with $|\bar\k|=1$ gives 
\begin{equation}\label{e:speczero}
\lim_{r\to 0} r^8\disp(\lambda, r^{-1}k_x,r^{-1}k_y) = d_1d_2\lambda,
\end{equation}
which yields the solution $\lambda=0$ at $r=0$, i.e.\ $|\k|=\infty$ and $\lambda_\infty(\k)$ for sufficiently large $|\k|$ via the implicit function theorem. Hence, $\calL$ does not have a spectral gap to the imaginary axis for \cref{e:sw} posed on $\Omega_h=\R^2$ and on $\Omega_h=\T^2$.  As a consequence, the standard approach to center manifolds cannot be applied in order to reduce the bifurcation problem from a trivial flow to a finite dimensional ODE. We note that in practice, backscatter is applied to a numerically discretized situation, which effectively cuts the available wave vectors at some large $|\k|$. While a spectral gap is retained in this case, properties of the present continuum equations enter on large-scales. 

\medskip
In contrast, in the viscous case $d_1=d_2=0$, $b=b_1=b_2<0$ an analogous scaling analysis shows that the largest real parts of the spectrum for large $|\k|$ approach the negative value $g H_0/b$ as $|\k|\to\infty$. 
In the inviscid case, $d_1=d_2=b_1=b_2=0$ the spectrum for large $|\k|$ approaches the negative value $-\lb/H_0$, and a complex conjugate pair $-\lb/(2H_0) \pm \rmi\sqrt{gH_0}$. Hence, in these cases real parts for large $|\k|$ are bounded uniformly away from zero. 
See \citep{PRY2023} for details.

\medskip
Returning to \cref{e:disp}, by the Routh-Hurwitz criterion, a solution $\lambda$ for fixed $\k$ satisfies $\Re(\lambda)<0$ if and only if 
\begin{equation}\label{e:RH}
a_1>0,\; a_3>0,\; a_1a_2-a_3>0,
\end{equation} 
a zero root $\lambda=0$ if and only if $a_3=0$, and has purely imaginary roots $\lambda=\pm\rmi\omega\in\rmi\R\setminus\{0\}$ if and only if $a_1a_2-a_3=0$ and $a_2>0$. 
Without backscatter $b_j,d_j=0,j=1,2$, the conditions in \cref{e:RH} are satisfied for all wave vectors $\k\neq0$ and any  bottom drag coefficient $\lb>0$, which implies that $(\vh,\eta)=(0,0)$ is spectrally stable in this case (with neutral mode at $\k=0$).

\begin{figure}[t!]
\centering
\subfigure[]{\includegraphics[trim = 4cm 8.5cm 5cm 9cm, clip, height=4.5cm]{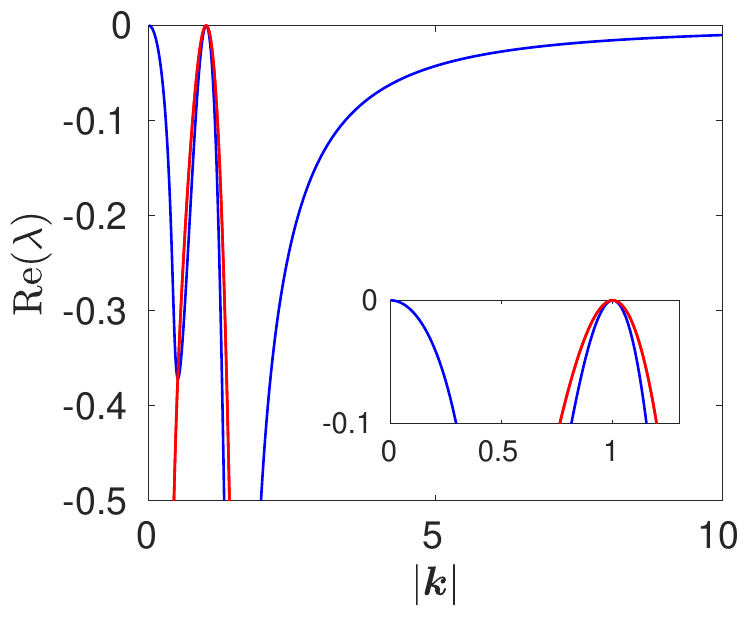}}
\hfil
\subfigure[]{\includegraphics[trim = 4cm 8.5cm 5cm 9cm, clip, height=4.5cm]{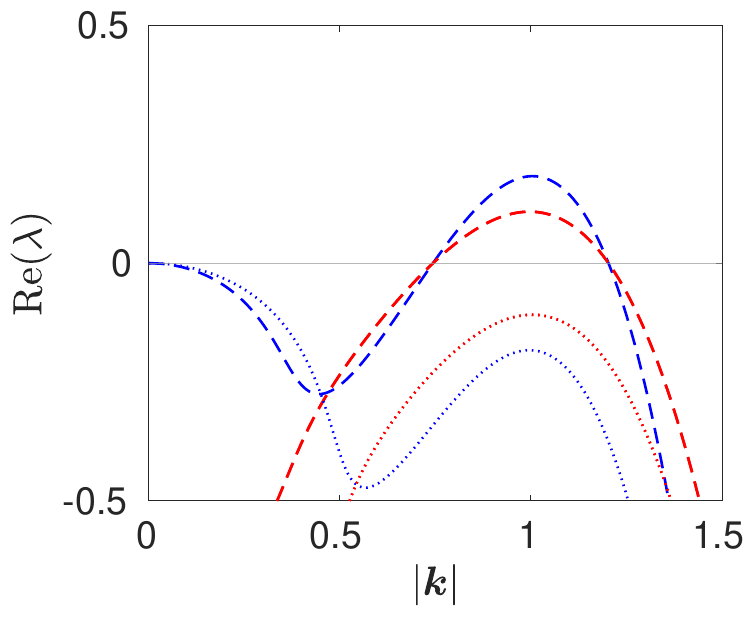}}
\caption{(Fig.~1 in \citep{PRY2023}.) Samples of spectra of $\calL$ from \eqref{e:linop} in the isotropic case $d_1=d_2 =1$, $b_1=b_2 =2$ and $f=0.3$, $g=9.8$, $H_0=0.1$ so that $\lb_\crit=0.1$ and $k_\crit=1$. (a) The real (blue) and complex (red) spectrum (magnification in inset) are simultaneously critical for $\lb=\lb_\crit$ at $|\k|= k_\crit =1$; the real spectrum is also zero at $\k=0$ and approaches zero as $|\k|\to \infty$. (b) Stable case at $\lb = 0.12$ (dotted) and an unstable case at $\lb = 0.08$ (dashed).}
\label{f:specinf}
\end{figure}

For $\lb=0$ and isotropic backscatter $d=d_1=d_2$, $b=b_1=b_2$ the coefficients $a_1$ and $a_3$ equal $dk^4-bk^2$ up to a positive factor. It is then straightforward that stability is precisely determined as in \eqref{e:dispBack}, which is a signature of the fact that solutions to 2D Euler are solutions to the shallow water equations with $\eta=0$. 
More generally, for any $\lb$, in the isotropic case \cref{e:disp} depends on $\k$ only through $K:=|\k|^2$. so that the spectrum is rotationally symmetric in $\k\in\R^2$. Moreover, $a_1$, $a_3$, and thus $a_1a_2-a_3$, possess the factor $F(K;C) := d K^2 - b K + C/H_0$. In addition, $a_1$ and $a_1a_2-a_3$ have the sign of $F$, and $a_3$ that of $KF$. 
It follows that $\lambda=0$ is a solution at $K=0$ and at roots of $F$. The latter emerge when decreasing $C$ below the threshold $C_\crit$ at wave vectors $k_\crit$, where
\begin{equation}\label{e:isocritCk}
\lb_\crit := \frac{b^2H_0}{4d}, \quad |\k|=k_\crit := \sqrt{\frac{b}{2d}}.
\end{equation}
At $C=\lb_\crit$ we denote the solutions by $\lambda_1(k_\crit;\lb_\crit)=0$, $\lambda_2(k_\crit;\lb_\crit)=-\lambda_3(k_\crit;\lb_\crit)=-\rmi \omega_\crit$, where 
\begin{equation}\label{e:omc}
\omega_\crit := \sqrt{a_2}|_{\lb=\lb_\crit, |\k|=k_\crit}=\sqrt{g H_0 k_\crit^2+f^2}.
\end{equation}
We note that for weak backscatter, as $d,b\to 0$ the scalings of $\lb_\crit$ and $k_\crit$ differ, so that fixed $k_\crit$ requires $\lb_\crit\to 0$ and fixed $\lb_\crit$ requires $k_\crit\to\infty$.
 
The continuations of the critical roots satisfy $\blue \Re(\lambda_j(|\k|;\lb_\crit))<0$ for $|\k|\in\R\setminus\{0,k_\crit\}$, $j=1,2,3$ so that, in the wave vector plane, the critical spectrum of $\calL_\crit:=\calL|_{\lb=\lb_\crit}$ from \cref{e:linop} away from the origin forms a circle centered at the origin with radius $k_\crit$, cf.\ Figure \ref{f:specinf}. Moreover, as $C$ decreases below $\lb_\crit$, the zero state becomes unstable \emph{simultaneously} via a stationary (akin to a Turing) instability and an oscillatory (akin to a Turing-Hopf) instability with finite wave numbers in presence of a neutrally stable mode with zero wave number. 

In the anisotropic case $b_1\neq b_2$ and/or $d_1\neq d_2$ 
with $b_j,d_j>0,j=1,2$, the structure of the spectrum is more complicated. Roughly speaking, the real spectrum is always more unstable than any non-real spectrum. In particular, the primary instability with respect to decreasing $\lb$ is purely stationary. We refer the reader to \citep{PRY2023} for a detailed discussion.

\medskip
We can relate critical eigenmodes to those of the inviscid case without drag, and thus the usual geophysical eigenmodes with their terminology, cf.\ e.g.\  \citep{pedlosky1987geophysical,Zeitlin,Vallis2017}.  
Due to the frequency relation \cref{e:omc}, we refer to the corresponding oscillatory eigenmodes as gravity waves (GWs); in the shallow water context these are also called Poincar\'e waves \citep{Vallis2017}. Since the steady modes are in geostrophic balance, we interpret the instabilities as backscatter and bottom drag induced instabilities of selected geostrophic equilibria and gravity waves. This interpretation is also according to the form of the eigenmodes in \S\ref{s:Turing}.

\subsubsection{Notes on global well-posedness for divergence free models}\label{s:wellposed} 

The incompressible models with backscatter discussed in \S\ref{s:PDE}, i.e., the 2D Euler, the 3D primitive and Boussinesq equations, as well as others, such as the quasi-geostrophic potential vorticity equations with backscatter, can be shown to be globally well-posed. This means that these models possess unique global strong solutions, and notably this also holds for bounded spatially and temporally varying backscatter coefficient $b$. These results will be presented in detail in an upcoming publication and here we outline the setting and relation to the literature. 

For a typical definition of a strong solution, the 2D Euler equations with backscatter serve as a representative example, where it is defined as follows: Let $\u_0 \in L_{2,\div}(\T^2;\R^2)$. Then, a function $\u \in L_{2,\textnormal{loc}}(0,\infty;L_{2,\div}(\T^2;\R^2))$ is called a \textit{global strong solution} of the Euler equations with backscatter if $\u \in L_{2,\textnormal{loc}}(0,\infty;H_{\div}^4(\T^2;\R^2)) \cap H_{\textnormal{loc}}^1(0,\infty;L_{2,\div}(\T^2;\R^2))$, $\u(0) = \u_0$, and \eqref{eq:2DNSa} holds with $\u$ for a.e. $t \in (0,\infty)$; here
\begin{align*}
L_{2,\div}(\T^2;\R^2)&:= \bigg\{\w \in L_2(\T^2;\R^2); \ \int_{\T^2} \w \cdot \nabla \boldsymbol{\varphi} \, \dif\x = 0 \ (\boldsymbol{\varphi} \in H_0^1(\T^2;\R^2)) \bigg\}, \\
H_{\div}^k(\T^2;\R^2) &:= H^k(\T^2;\R^2) \cap L_{2,\div}(\T^2;\R^2) \qquad (k \in \N). 
\end{align*}
The existence of unique global strong solutions, i.e., global well-posedness, can typically be shown for the aforementioned models under the condition that the initial data satisfy $H^2$-regularity. For instance, in the case of the 2D Euler equations, this condition is met when $\u_0 \in H_{\div}^2(\T^2;\R^2)$.

While the backscatter parameterization injects energy into the underlying system at the large scales,  the small scales are strongly damped by the fourth-order hyperviscosity term in the backscatter parameterization. 
As will be discussed in detail elsewhere, the apparent lack of control of large scales is compensated by the transport nonlinearity $(\u \cdot \nabla)\u$, in the sense that it 
transfers energy from the large scales to the strongly controlled small scales with sufficient strength 
to ensure global well-posedness. 

When replacing the backscatter parametrization with a viscosity term $\nu \Delta$ in the aforementioned models we obtain the conventional models, such as the 2D Navier-Stokes equations instead of the Euler equations with backscatter. For these cases, except the 3D Boussinesq equations, global well-posedness is well known; 
see, for example, \citep[Prop. 3.2]{M08}, \citep{CT07}, \citep[\S 3.2]{T83} and the references therein. The standard method used in these references to show global well-posedness involves proving energy estimates and the use of the Galerkin truncations, cf. \citep[\S 30]{Zeidler90}. 
This can also be applied to the aforementioned models with backscatter, where the additional regularity due to the hyperviscous term typically simplifies some aspects. 
For instance, strong solutions of the 2D Euler equations with backscatter, as $H^4$-functions, possess higher regularity than strong solutions of the 2D Navier-Stokes equations, which are $H^2$-functions. 
Consequently, stronger Sobolev embedding theorems can be applied which directly yield the desired energy estimates. 
In the proofs of global well-posedness for the aformentioned models without backscatter, it may be necessary to establish  
additional energy estimates, so that proofs require more steps compared to the case with backscatter. 

It is worth noting that this approach works well with divergence-free vector fields condition, 
as the nonlinearity vanishes at certain points within the calculations. 
For models that admit divergence of the velocity vector, 
such as the rotating shallow water equations with backscatter or the Kuramoto-Sivashinsky equations, using this method to establish global well-posedness is generally less promising. 
However, in general one can show the existence of mild solutions, at least locally in time, and, e.g., global classical solutions for the dissipative shallow water equations without backscatter \citep{K85}. Interestingly, as discussed in \S\ref{s:spec} for shallow water with the chosen backscatter, the spectrum does not yield a spectral gap. This suggests there is less smoothing than for the viscous case, but we do not go into further details here.

\medskip
Having established global existence, we can ask about the long term dynamics. 
From the discussion of the spectra of the linearizations it is clear that the ratio of $d$ and $b$ plays an important role, in particular on bounded domains. 
Indeed, for $b<d$ and $\Omega_h=\T^2$ it is straightforward for the 2D Euler equations \eqref{eq:2DNS} with zero mean conditions (i.e., $\int_{\T^2} \u\,\dif\x = 0$) that $\u=0$ is the global attractor: 
Recall the energy equation \eqref{e:2DNS_en}. 
From $\int_{\T^2}\nabla \u \,\dif\x = 0$ and the Poincar\'e inequality $\|w\|_2 \leq \|\nabla w\|_2$ for $w \in H^1(\T^2)$ with $\int_{\T^2} w \,\dif\x = 0$ it follows for $b < d$ that $d\|\Delta \u\|_2^2 - b\|\nabla \u\|_2^2 \geq (d-b)\| \u\|_2^2$, and thus 
\[
\frac{1}{2}\frac{\dif}{\dif t} \|\u\|_2^2 + (d-b)\|\u\|_2^2 \leq 0.
\]
Applying Gr\"onwall's inequality, we obtain $\|\u\|_2 \leq \rme^{-(d-b)t}$, which is precisely the exponential decay predicted from the spectrum of the linearization. For the Boussinesq equations \eqref{eq: introB} with constant $N^2>0$ the situation is similar, with positive decay rate for $b<d$ that also depends on 
$N^{2},\mu$ and $\nu_v$; analogously for the primitive equations \eqref{eq: primEq}.

For $b>d$ the linearization of the 2D Euler with backscatter \eqref{eq:2DNS} predicts instability, which precludes global convergence to zero. Since the unstable eigenspace is four-dimensional, an attractor has at least that dimension. However, as discussion in the next section, certain solutions in the unstable eigenspace are independent of the transport nonlinearity and thus grow exponentially and unboundedly. Hence, there does not exist a classical global compact attractor. In fact, the unbounded growth occurs in open sets of phase space as shown in \S\ref{s:stabgrow}. Since the 2D Euler equations are an invariant subsystem in the Boussinesq and primitive equations with simple boundary conditions, such solutions also occur for these.

\section{Plane waves, superpositions and unbounded growth}\label{s:PlaWave}

In this section, we seek insight into the impact of backscatter with constant coefficients through certain specific waves and flows. We start from explicit plane wave type solutions that depend on a single phase variable, that couples spatial and temporal dependence, and for which the nonlinear terms vanish or is a gradient that only impacts the pressure. 

The rotating 2D Euler equations \eqref{eq:2DNS} admit the one-dimensional space of solutions
\begin{align}\label{e:2DNSlargescale}
\vh(t,x,y)= \psi(t,y)\begin{pmatrix}1\\0\end{pmatrix}, \; \psi(t,y):= A\rme^{(b - d)t}\cos(y), \; A\in\R.
\end{align}
For $f=0$ this is evident with $p=0$, and for $f\neq 0$ the associated pressure is $p=-fA\rme^{(b - d)t}\sin(y)$.
Notably, \eqref{e:2DNSlargescale} is a solution for arbitrary $A$ since it lies in the `kernel' of the transport nonlinearity for $f=0$, or is mapped to a gradient for $f\neq 0$ by it. 

In particular, these solutions are steady for $b=d$, decay exponentially for $b<d$ and grow exponentially and unboundedly for $b>d$. Such unbounded growth can occur in all models introduced in \S\ref{s:PDE} and is a remarkble explicit signature of the energy input through backscatter. In smooth dynamical systems such unbounded growth is sometimes called `grow-up', in analogy to the finite-time `blow-up' \citep{BenGal}. 
We show in \S\ref{s:stabgrow}  that for \eqref{eq:2DNS} this is even a dynamically stable phenomenon. 

By symmetry, translations of the solutions \eqref{e:2DNSlargescale} are also solutions, so that cosine can be replaced by sine,  and we can also jointly replace $y$ by $x$ and the vector $(1,0)$ by $(0,1)$. 
All these are also  real eigenmodes of \eqref{e:Lin2DNS}, but their superposition does not necessarily yield a solution to the nonlinear \eqref{eq:2DNS}. However, we can superpose different spatial translates of \eqref{e:2DNSlargescale} and the $x$-dependent counterpart, which gives two 2D spaces of solutions (instead of the full 4D eigenspace). Concerning the wave numbers,  \eqref{e:2DNSlargescale} is `large scale' on the unit torus $\T^2$, but solutions with higher wave numbers exist by replacing $\cos(y)$ with $\cos(k y)$ for any $k\in\N$ and adjusting $b,d$ in the exponential. When posed on $\R^2$ we can consider any wave length, similar to the next subsection. See also \citep{PR2020} and the references therein. All such solutions also called monochromatic since they depend on a single wave number. The explicit solutions we discuss rely on some orthogonality relation of wave vector and velocity. With some abuse of terminology, due to this plane wave structure, we sometimes refer to such solutions as waves or flows, although this section  is not intended for a broader study of waves. 

\medskip
Before we consider the rotating shallow water and Boussinesq equations in more detail, we note that the primitive equations admit such solutions as well. More precisely, it is straightforward to see that the primitive equations \eqref{eq: primEq} with $f = 0$ subject to the boundary conditions \eqref{eq: primEqBC} admit the solutions
\[
 T(t,\x):= 0, \; \u(t,\x):= A\rme^{(b - d - \nu_{v}\omega^2)t} \cos(\omega z) \cos(ky)\begin{pmatrix}
   1 \\ 0
\end{pmatrix}, \; A \in \R,
\]
where $\omega \in (-\pi/2,\pi/2)$ is chosen such that $\beta = \omega \tan(\omega H)$. The latter is equivalent to the explicit solutions $\u$ satisfying the bottom drag boundary condition $\frac{\partial \u}{\partial z}|_{z = -H} = \beta \u|_{z = -H}$ from equation \eqref{eq: primEqBC}. In particular, these solutions are steady for $b=d + \nu_v \omega^2$, decay exponentially for $b<d + \nu_v \omega^2$ and grow exponentially and unboundedly for $b>d + \nu_v \omega^2$.

\subsection{Rotating shallow water}\label{s:swe}
For \eqref{e:sw} with $\F=0$ the analogous explicit solutions result from the plane wave ansatz 
\begin{align}\label{sol: RSWB1}
\vh = \psi(t,\k\cdot\x)\k^{\perp}\,, \quad \eta = \phi(\k\cdot\x)\,,
\end{align}
with phase variable $\xi=\k\cdot\x$. In particular, the nonlinear terms vanish due to the orthogonality of $\k$ and $\k^\perp$. On the one hand, this implies time-independence of $\eta$ due to \eqref{e:swb} since $\vh$ is divergence free. On the other hand, inserting \eqref{sol: RSWB1} into \eqref{e:swa} for $\F=0$ 
and taking scalar products with non-zero $\k^\perp$ and $\k$, respectively, result in the linear equations
\begin{subequations}\label{cond: RSWB1}
\begin{align}
\frac{\p\psi}{\p t} &= -|\k|^2\left(d_1\ky^2+d_2\kx^2\right)\frac{\p^4\psi}{\p\xi^4}-\left(b_1\ky^2+b_2\kx^2\right)\frac{\p^2\psi}{\p\xi^2}\label{cond: RSWB1a} \\
g\frac{\p\phi}{\p\xi}-f\psi &= \kx \ky\left((d_1-d_2)|\k|^2\frac{\p^4\psi}{\p\xi^4}+(b_1-b_2)\frac{\p^2\psi}{\p\xi^2}\right)\,.\label{cond: RSWB1b}
\end{align}
\end{subequations}
For isotropic backscatter $b_1=b_2$, $d_1=d_2$, \eqref{cond: RSWB1b} is a geostrophic balance relation of velocity and pressure. 
Otherwise, in \eqref{cond: RSWB1b} the pressure gradient compensates a mix of Coriolis and backscatter terms, thus the velocity field \eqref{sol: RSWB1} with \eqref{cond: RSWB1} is in general not geostrophically balanced.

For monochromatic solutions that contain a single non-trivial Fourier mode, \eqref{cond: RSWB1} implies the form
\begin{align}\label{sol: RSWB2}
\vh = \alpha_1\rme^{\lambda t}\cos(\k\cdot\x + \tau)\k^{\perp}\,, \quad \eta = \alpha_2\frac{f}{g}\sin(\k\cdot\x + \tau) + s\,,
\end{align}
with arbitrary shifts $\tau,\, s\in\R$ and the other real parameters must satisfy the algebraic equations
\begin{subequations}\label{cond: RSWB2}
\begin{align}
\lambda &=  (b_1-d_1|\k|^2)\ky^2+(b_2-d_2|\k|^2)\kx^2 \label{cond: RSWB2a} \\
\frac{\alpha_2-\alpha_1}{\alpha_1}f &= \kx \ky \bigl((d_1-d_2)|\k|^2+b_2-b_1\bigr)\label{cond: RSWB2b}\\
\alpha_2\cdot\lambda&=0\label{cond: RSWB2c}\,.
\end{align}
\end{subequations}
Here \eqref{cond: RSWB2a} is a dispersion relation of growth/decay and wave vector, and \eqref{cond: RSWB2b} is an amplitude relation for $\alpha_1$ and $\alpha_2$ in terms of the wave vector so that solutions come at least in one-dimensional spaces with constant $(\alpha_2-\alpha_1)/\alpha_2$. The compatibility condition \eqref{cond: RSWB2} means $\alpha_2=0$, i.e., constant $\eta$, or $\lambda=0$, i.e. a steady solution. In particular, these explicit solutions with non-trivial depth variation $\eta$ are steady. Solutions with $\lambda> 0$ grow exponentially and unboundedly due to arbitrary amplitude {\blue $\alpha_1$}. In Figure~\ref{Fig. 1b} we plot the loci of these different solutions in the wave-vector plane. Certain multi-chromatic solutions can be constructed by superposition of solutions on the same ray in the wave-vector plane. See \citep{PRY22} for details. This \textit{radial superposition principle} in the example of Figure \ref{Fig. 1c} gives three-dimensional subspaces in which the dynamics are linear. Clearly, all this is strongly constrained when allowing only for isotropic backscatter.

\begin{figure}
\begin{center}
\subfigure[]{
\includegraphics[trim=5cm 8.5cm 5.5cm 8.5cm, clip, height=5cm]{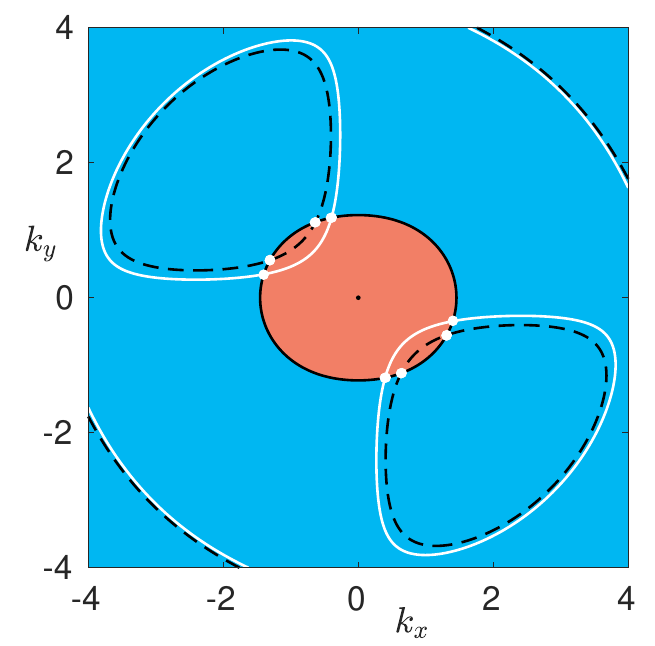}\label{Fig. 1b}}
\hfil
\subfigure[]{
\includegraphics[trim=5cm 8.5cm 5.5cm 8.5cm, clip, height=5cm]{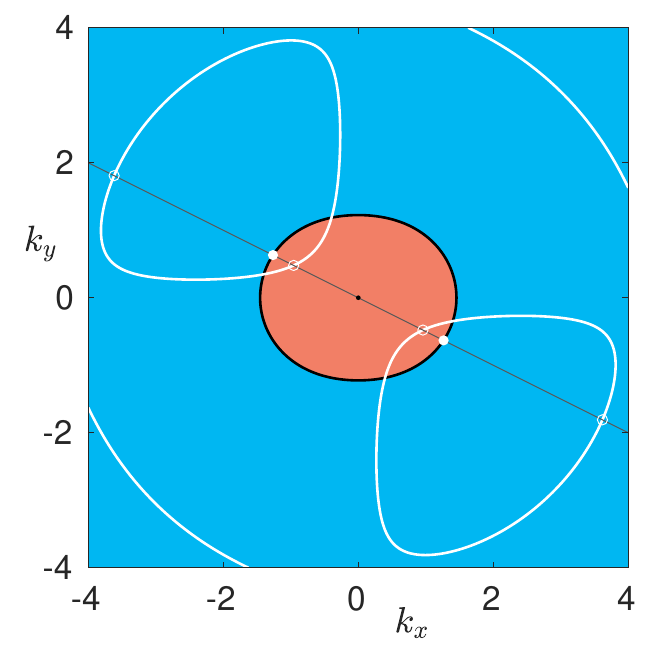}\label{Fig. 1c}}
\hfil
\subfigure[]{
\includegraphics[trim=5cm 8.5cm 5.5cm 8.5cm, clip, height=5cm]{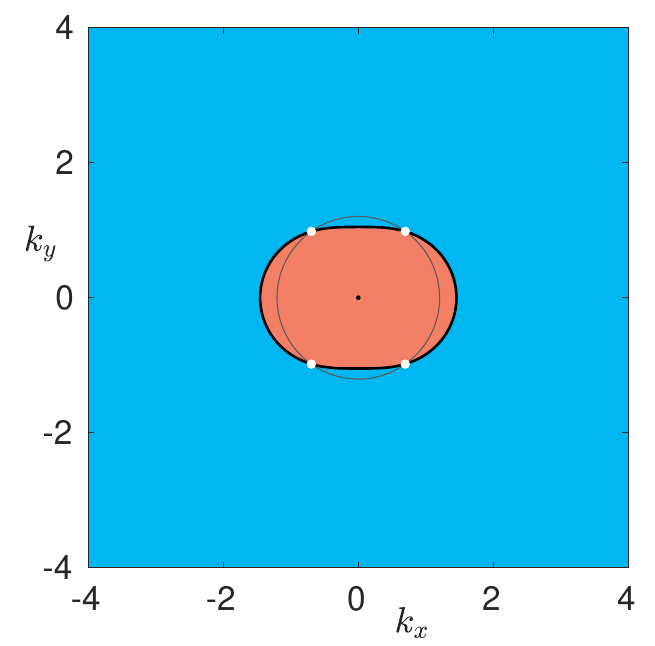}
\label{Fig. RadSuperpositionb}}
\caption{\label{Fig. 1} 
(Taken from Figs.~1, 8 in \citep{PRY22}.)
We plot part of the wave vector plane where an associated growth rate is $\lambda>0$ (red), $\lambda=0$ (black), $\lambda<0$ (blue). 
Panels (a,b) concern the loci of explicit solutions \eqref{sol: RSWB2} with fixed parameters $d_1=1.0,\, d_2=1.04,\, b_1=1.5,\, b_2=2.2,\, f=0.3,\, g = 9.8,\, H_0 = 0.1,\, \alpha_1=1.0$ and $\alpha_2=-0.5$. White curves: loci of solutions with $\alpha_2=0$; white bullets: loci of steady solutions; black dashed: loci of solutions satisfying \eqref{cond: RSWB2b} only. Panel (c) concerns \eqref{cond: B-intsupa} with  $d_1, d_2,b_2$ as in (a,b) and $b_1=1.1$ so white bullets mark the loci of steady solutions. In (b,c) the solutions corresponding to intersections with an arbitrary gray line in (b) or arbitrary circle in (c), marked by white circles and bullets, can be linearly superposed and still yield explicit solutions. 
}
\end{center}
\end{figure}

\subsection{Boussinesq equations}
The Boussinesq equations \eqref{eq: introB} allow for many more explicit solutions, in particular on a vertically periodic domain. Horizontal flows are a class of these that is closely related to those of \eqref{e:sw}. These are barotropic with horizontal velocity field (i.e., $\v$ is independent of the vertical coordinate $z$), vanishing vertical velocity $w\equiv 0$, and horizontally constant buoyancy $\b=\b(t,z)$. Inserting this ansatz into \eqref{eq: introB} yields $p=\tilde{p}+\mathtt B(t,z)$, where $\frac{\p \mathtt B(t,z)}{\p z}=\b(t,z)$, and the decoupled system
\begin{subequations}\label{eq: rB}
\begin{align}
\frac{\p\vh}{\p t}+(\vh\cdot\nabla)\vh+f\vh^{\perp}+\nabla\tilde{p} &= - \mathrm{diag}\bigl(d_1\Delta+b_1,d_2\Delta+b_2\bigr)\Delta\vh\label{eq: rBa}\\
\nabla\cdot\vh&= 0\label{eq: rBb}\\
\frac{\p \b}{\p t} &= \mu\frac{\p^2}{\p z^2}\b\,,\label{eq: rBc}
\end{align}
\end{subequations}
with gradient and Laplace operators for the horizontal directions $\x=(x,y)$,  $\vh=\vh(t,\x)\in\R^2$. In the simplest case, the heat equation for the buoyancy  \eqref{eq: rBc} can be readily solved by Fourier transform. 

\medskip
The momentum equations are, for these solutions, less restrictive than the shallow water equations and admit a larger set of explicit flow solutions; see also \citep{PR2020} for the setting without backscatter. Explicit solutions of \eqref{eq: rBa} and \eqref{eq: rBb} for which the nonlinear terms vanish can be identified by the ansatz for wave shape $\psi$ and pressure profile $\phi$
\begin{align}\label{sol: B}
\vh = \psi(t,\k\cdot\x)\k^{\perp}\,,\quad \tilde{p}= g\phi(t,\k\cdot\x)\,,
\end{align}
where $\k\in\R^2$ and without loss of generality $|\k|=1$ by the freedom in choosing $\psi$ and $\phi$. Equation \eqref{eq: rBb} is always satisfied and inserting \eqref{sol: B} into \eqref{eq: rBa} gives \eqref{cond: RSWB1}, a signature of the relation between horizontal flow and the shallow water equations. 

The general wave shape $\psi$ in \eqref{sol: B} contains superposition of arbitrarily many monochromatic waves in the \textit{same wave vector direction} $\k$ and, in contrast to \eqref{e:sw}, \textit{any wave number} $|\k|$. The  Boussinesq equations also admit an \textit{angular superposition principle} of $\vh$ in the form \eqref{sol: RSWB2} with \textit{different wave vector directions} and the \textit{same wave number}. See an example in Figure~\ref{Fig. RadSuperpositionb}, and we refer the reader to \citep{PR2020} for details. 
By integrating over the whole circle $S_{\sc}:=\{\k\in\R^2\mid |\k|=\sc\}$ for any fixed $\sc>0$ the exact form is then
\begin{subequations}\label{sol: B-intsup}
\begin{align}
\vh(t,\x)=&\ \int\displaylimits_{S_{\sc}}\alpha(\k)\rme^{\lambda(\k)t}\sin\bigl(\k\cdot\x+ \tau(\k)\bigr)\k^{\perp} \dif \k, \label{sol: B-intsupa}\\
\begin{split}
\tilde{p}(t,\x)=&\ -\sc^4\int_0^{2\pi}\int_{\varphi_1}^{2\pi} \alpha_1\alpha_2\,\rme^{(\lambda_1+\lambda_2)t}\big(\cos\xi_1\cos\xi_2+\cos(\varphi_1-\varphi_2)\sin\xi_1\sin\xi_2\big)\,\dif\varphi_2\,\dif\varphi_1\\
&\ -f\int\displaylimits_{S_{\sc}}\gamma(\k)\rme^{\lambda(\k)t}\cos\bigl(\k\cdot\x+ \tau(\k)\bigr)\,\dif\k, \label{sol: B-intsupb}
\end{split}
\end{align}
\end{subequations}
where, for $i=1,\,2$, we set $\alpha_i:=\alpha(\k_i)$ with $\k_i:=\sc(\cos(\varphi_i),\sin(\varphi_i))$, $\lambda_i:=\lambda(\k_i)$ and $\xi_i:= \k_i\cdot\x+ \tau(\k_i)$, for all $0\leq\varphi_i<2\pi$. 
These must (for almost all $\k\in S_{\sc}$) be taken from the solution set to the dispersion and amplitude relations
\begin{subequations}\label{cond: B-intsup}
\begin{align}
\alpha(\k)\lambda(\k)&=\alpha(\k)\bigl((b_1-d_1\sc^2)\ky^2+(b_2-d_2\sc^2)\kx^2\bigr)\,,\label{cond: B-intsupa}\\
f\bigl(\gamma(\k)-\alpha(\k)\bigr)&=\alpha(\k)\kx \ky\bigl((d_1-d_2)\sc^2+b_2-b_1\bigr)\label{cond: B-intsupb}\,.
\end{align}
\end{subequations}
Sufficient for the convergence of the integrals is $\alpha\in L_1(S_{\sc})$, $\tau\in L_{\infty}(S_{\sc})$.

Notably, for these explicit solutions the nonlinear terms are in general not vanishing, but are a gradient that can be fully compensated by the pressure gradient. See \citep{PR2020} for further discussion and references for explicit solutions with gradient nonlinearities without backscatter.

\subsubsection{Flows with vertical structure and coupled buoyancy}\label{Rotating Boussinesq coupled}
The rotating Boussinesq equations with backscatter \eqref{eq: introB} also admit explicit solutions in which the velocity and the buoyancy are coupled, and in which the vertical dependence and velocity component is non-trivial. These again can be related to geophysical flows and waves in the viscous or inviscid case. Here we briefly showcase some parallel flows, Kolmogorov flows and monochromatic internal gravity waves and refer the reader to \citep{PRY22} for further details.

\paragraph{Parallel flow}\label{s:parflow}
As is well-known in the inviscid and viscous case, e.g. \citep{Wang90}, a purely vertical velocity  
admits solutions of the form 
\begin{align}\label{sol: Parallel}
\v(t,\x)=w(t,x,y)\3 \,,\quad \b(t,\x)=\tb(t,x,y) \,,\quad p(t,\x)=\tilde{p}(t)z \,,
\end{align}
where $w$ and $\tb$ satisfy (with horizontal Laplacian)
\begin{subequations}\label{cond: Parallel}
\begin{align}
\frac{\p w}{\p t} - \nu_v \Delta w+\tilde{p}&=\tb\label{cond: Parallela}\\
\frac{\p\tb}{\p t} - \mu\Delta\tb&=-N^2w\,.\label{cond: Parallelb}
\end{align}
\end{subequations}
In particular, these parallel flows are independent of horizontal kinetic energy backscatter. They can be superposed with  horizontal flows \eqref{sol: B} that have zero buoyancy, if their wave vector directions $\k$ are the same. Therefore, any such parallel flow is unboundedly unstable in \eqref{eq: introB} through perturbations of the form \eqref{sol: B} with the same wave vector $\k$ and small enough wave number $|\k|$. 
 
\paragraph{Kolmogorov flow}
Another well-known class of explicit solutions for the Boussinesq equations, see e.g.\ \citep{BalmforthYoung2005}, take the form
\begin{align}\label{sol: Kolmogorov}
\v(t,\x)= \rme^{\lambda t}\cos(\k\cdot \x)\a \,,\quad \b(t,\x)=c\rme^{\lambda t}\cos(\k\cdot \x) \,,\quad
p(t,\x)=\gamma \rme^{\lambda t}\sin(\k\cdot \x) \,,
\end{align}
with wave vectors of the form $\k=(k,0,-m)$, $k,m\in\R$, 
and the flow direction $\a=\alpha(0,1,0) +\beta(m,0,k).$ In order to solve the Boussinesq equations \eqref{eq: introB}, the coefficients of \eqref{sol: Kolmogorov} have to satisfy
\begin{align}\label{cond: coeff}
\begin{pmatrix}
-f & m(\lambda+\delta_1) & 0 & k\\
\lambda+\delta_2 & fm & 0 & 0\\
0 & k(\lambda+\delta_3) & -1 & -m\\
0 & N^2k & \lambda+\delta_\mu & 0
\end{pmatrix}
\begin{pmatrix}
\alpha \\ \beta \\ c \\ \gamma
\end{pmatrix}
=0\,,
\end{align}
where $\delta_{\mu}:=\mu|\k|^2$, $\delta_i:=d_i|\k|^4-b_i|\k|^2$ for any $1\leq i\leq3$. The non-trivial explicit solutions require vanishing determinant of this matrix, and its roots $\lambda=\lambda(\k)$ determine the linear dynamics of \eqref{sol: Kolmogorov}. 
Superposition of different Kolmogorov flows is possible, as long as all wave vectors $\k$ have the same direction, which can prove unbounded instability of steady Kolmogorov flows. 
{\blue For instance, for stable stratification $N^2>0$, the simplest case is zero thermal diffusivity $\mu=0$, where Kolmogorow flows with wavelength $|\k|^2<b_2/d_2$ grow unboundedly and those with $|\k|^2=b_2/d_2$ are steady and unboundedly unstable against perturbations with growing Kolmogorov flows.}

\paragraph{Internal gravity waves (IGW)}
This kind of explicit (and also monochromatic) solution structurally differs from the aforementioned flows and is prominent in geophysics, cf.\ e.g.\ \citep{Achatz06}. The wave profile of an IGW is a time-dependent travelling wave with phase variable $\xi=kx+mz-\omega t$ and takes the form
\begin{align*}
\v(t,\x)&= \alpha_1 \rme^{\lambda t}\sin(\xi)(0,1,0) + \alpha_2\omega \rme^{\lambda t}\cos(\xi)(-m,0,k) \,,\\
\b(t,\x)&= \beta_1 \rme^{\lambda t}\sin(\xi) + \beta_2\omega \rme^{\lambda t}\cos(\xi) \,,\\
p(t,\x)&= \gamma_1 \rme^{\lambda t}\cos(\xi) + \gamma_2\omega \rme^{\lambda t}\sin(\xi)\,.
\end{align*}
The conditions for these to be (non-trivial) explicit solutions of \eqref{eq: introB} in particular depend on $\omega$. In fact, for $\omega=0$ these are Kolmogorov flows from \eqref{sol: Kolmogorov} with $\beta=0$.
For $\omega\neq0$ the existence conditions are given by eight equations that are linear in $\alpha_j,\beta_j,\gamma_j$, and we refer the reader to \citep{PRY22} for details. 
{\blue A simple example are IGW with $\omega= \pm f$, $k=0$, $\alpha_1=-\alpha_2 m f$, $\beta_1=\beta_2=\gamma_1=\gamma_2=0$, $d_1=d_2, b_1=b_2$, 
whose growth rate is $\lambda = (b_2-d_2m^2)m^2$.} 
While Kolmogorov flows exist on the whole wave vector space, IGW in general do not. 
It is possible to superpose IGWs and Kolmogorov flows as a solution to \eqref{eq: introB} if these have the same direction of wave vectors, and this can also be in the form of an integral. 
With this superposition one can prove in several cases the unbounded instability of steady IGWs due to perturbations with exponentially growing Kolmogorov flows, and vice versa. 
{\blue The mentioned examples provide simple cases for this, and again we refer to \citep{PRY22} for more details.} 

\medskip
The exhibition of samples in this section illustrates that unbounded growth for constant backscatter coefficients occurs in a variety of models and flows. This highlights that such backscatter can be too strong and lead to accumulation of energy into selected modes. The recipes given can also be applied to other models in particular the rather simple quasi-geostrophic equations, but also the primitive equations \eqref{eq: primEq} and certain more physical boundary conditions. We also note that since these solutions are linear modes, they also exist for Galerkin-truncations of the models, which is sometimes used instead of finite element, volume or difference discretizations.

\section{Stability of unbounded growth}\label{s:stabgrow}

In this section we show that the unbounded growth observed for the 2D Euler equations with backscatter by \eqref{e:2DNSlargescale} is even dynamically stable, as formulated in Theorem~\ref{t:stab2DNSE}. Although technically different, we note that such stability at infinity has conceptual similarity with stability of finite-time blow-up, e.g.\ \citep{GNT2010,RS2013}. 
Stable unbounded growth in this case holds for any non-zero initial value $\w(0)=\w_0$ lying in the $4$-dimensional vector space spanned by the initial values of \eqref{e:2DNSlargescale} and its translations; 
recall that \eqref{e:2DNSlargescale} and its translations produce solutions of \eqref{eq:2DNS} exhibiting unbounded and exponential growth when $b > d$. We also note that all these have mean zero.

For technical reasons, in this section we consider \eqref{e:2DNSlargescale} on $\T^2= \R^2/ 2\pi \Z^2$  restricted to the invariant subspace with zero mean, i.e., the system
\begin{subequations}\label{eq:2DNS-2}
\begin{align}
\p_t \u+ (d\Delta^2 + b\Delta)\u + (\u\cdot\nabla)\u + f\u^{\perp} + \nabla p &=  0,\label{eq:2DNS-2a}\\
\divh \u &= 0,\label{eq:2DNS-2b} \\
\int_{\T^2} \u \,\dif\x &= 0.
\end{align}
\end{subequations}
The assumption of zero mean has no impact on the stability of unbounded growth, since this is inherited from the mean zero projection as follows. We first observe that the mean $\boldsymbol{m}:=(2\pi)^{-2}\int_{\T^2}\u\,\dif\x\in\R^2$ for a solution $\u$ to \eqref{eq:2DNS} satisfies $\partial_t \boldsymbol{m} +f\boldsymbol{m}^\perp=0$ so that the length of $\boldsymbol{m}(t)\in\R^2$ remains constant (and $\boldsymbol{m}$ oscillates, unless $f=0$). The projection onto mean zero $\tilde \u:=\u-\boldsymbol{m}$ satisfies \eqref{eq:2DNS-2} with the additional drift term $\boldsymbol{m}\cdot\nabla\u$ on the left hand side of \eqref{eq:2DNS-2a}. This term is removed in the natural moving frame, i.e., $\check \u(t,\x):= \tilde \u\big(t,\x +\int_0^t \boldsymbol{m}(s){\rm d} s\big)$ satisfies \eqref{eq:2DNS-2}. Since $\|\check \u(t)\|_2= \|\tilde \u(t)\|_2$, and $\boldsymbol{m}$ has constant length, unbounded growth of $\u$ solving \eqref{eq:2DNS} with arbitrary mean is inherited from that of $\check \u$ solving \eqref{eq:2DNS-2}, which we discuss in this section.

\medskip
We remark that the rotating shallow water equations \eqref{e:sw} without drag $\F=0$ on $\T^2$, and restricted to the invariant subspace $\eta = 0$,  $\int_{\T^2}\vh\,\dif\x=0$, coincide with \eqref{eq:2DNS-2}. Consequently, the results of this section regarding the dynamics of system \eqref{eq:2DNS-2} equally hold for this special case of \eqref{e:sw}; again the assumption of zero mean is not relevant. 
Similarly, the 2D rotating Euler equations are an invariant subspace in the Boussinesq equations \eqref{eq: introB} for $\b=0$ and horizontal $\v$ that is independent of $x_3$; analogously for the primitive equations \eqref{eq: primEq} without forcing and {\blue $\beta = \gamma = 0$}, $\boldsymbol{\tau}=0$, $T^*=0$.

\medskip
Let us define the spaces of divergence-free vector fields with zero mean in $H^k$ and $L_2$,
\begin{align*}
H_{\div}^{0,k}(\T^2;\R^2)&:= \bigg\{\u \in H^k(\T^2;\R^2); \ \divh \u = 0, \ \int_{\T^2} \u \,\dif\x = 0 \bigg\} \qquad (k \in \N), \\  L_{2,\div}^0(\T^2;\R^2)&:= \bigg\{\u \in 
L_2(\T^2;\R^2); \ \int_{\T^2} \u \cdot \nabla \varphi\,\dif\x = 0 \ (\varphi \in H_0^1(\T^2;\R^2)), \ \int_{\T^2} \u \,\dif\x = 0 \bigg\}.
\end{align*}
We further denote by $P$ the Leray projection, which is the orthogonal projection of $L_{2,\div}^0(\T^2;\R^2)$ along $\{\u \in L_2(\T^2;\R^2); \ \smallint_{\T^2} \u \,\dif\x = 0 \}$, and define $B(\u,\v):= P((\u \cdot \nabla)\v)$ as a simplified expression of the advection term in \eqref{eq:2DNS-2}. Last but not least, we denote by $A$ the Stokes operator on $\T^2$, which is an unbounded operator in $L_{2,\div}^0(\T^2;\R^2)$ and defined by
\[
\operatorname{dom}(A):= H_{\div}^{0,2}(\T^2;\R^2), \quad A\u := -\Delta \u.
\]
Now we can rewrite \eqref{eq:2DNS-2} as the following evolution equation:   
\begin{subequations}\label{eq:2DNS-3}
\begin{align}
\frac{\dif}{\dif t} \u + d A^2\u - b A\u + B(\u,\u) + fP(\u^{\perp}) &= 0, \label{eq:2DNS-3a} \\ 
\u(0) &= \u_0; \label{eq:2DNS-3b}
\end{align}
\end{subequations}
note that the operator $-(dA^2-bA) - fP\Id^{\perp}$ relates to the operator $L$ considered in \eqref{e:Lin2DNS}. As indicated in \S\ref{s:wellposed}, the evolution equation \eqref{eq:2DNS-3} is globally well-posed. To wit, for all $\u_0 \in H_{\div}^{0,2}(\T^2;\R^2)$ there exists a 
\[
S(\cdot)\u_0:= \u \in L_{2,\textnormal{loc}}(0,\infty;H_{\div}^{0,4}(\T^2;\R^2)) \cap C([0,\infty);H_{\div}^{0,2}(\T^2;\R^2))
\] 
such that $\u(0) = \u_0$ and \eqref{eq:2DNS-3} holds in $L_2(\T^2;\R^2)$ for a.e. $t \in (0,\infty)$. Note that $S:[0,\infty) \to M(H_{\div}^{0,2}(\T^2;\R^2))$ defines a (nonlinear) semigroup, where we denote by $M(H_{\div}^{0,2}(\T^2;\R^2))$ the set of all maps from $H_{\div}^{0,2}(\T^2;\R^2)$ into itself.

It is well-known that there exists an orthonormalbasis $(\e_n)_{n \in \N}$ of 
$L_{2,\div}^0(\T^2;\R^2)$ consisting of eigenvectors of $A$, with an increasing sequence of eigenvalues $(\lambda_n)_{n \in \N}$ in $(0,\infty)$, such that 
\begin{align}
\e_1(x,y)= \begin{pmatrix}\cos(y)\\0\end{pmatrix}, \quad \e_2(x,y) =  \begin{pmatrix}\sin(y)\\0\end{pmatrix}, \quad \e_3(x,y) = \begin{pmatrix}0\\ \cos(x)\end{pmatrix}, \quad \e_4(x,y) = \begin{pmatrix}0\\ \sin(x)\end{pmatrix} \label{eq:eigfunct}  
\end{align}
and $\lambda_1=\lambda_2=\lambda_3= \lambda_4 = 1 < 2 =\lambda_5$.

As discussed in \S\ref{s:wellposed}, it turns out that for small backscatter coefficient $b$, more precisely $b < d= d\lambda_1$ in case of the evolution equation \eqref{eq:2DNS-3}, all global strong solutions of \eqref{eq:2DNS-3} decay exponentially to $0$ for $t \to \infty$, so that the global attractor of $S$ is $\u=0$. However, 
we recall from the beginning of \S\ref{s:PlaWave} that for $b > d$
\begin{equation}\label{eq:explsol}
S(t)(\e_j) = \rme^{-(d - b) t} \e_j  \qquad (t \geq 0)
\end{equation}
is an exponentially and unbounded growing solution for $t \to \infty$, where 
$j \in \{1,2,3,4\}$. In other words, when $b > d$ the semigroup $S$ does not possess a global attractor in the classical sense and the aformentioned `grow-up' phenomenon occurs; the intuition leading to this outcome is also indicated in \S\ref{s:wellposed}. The previously mentioned stability result of unbounded growth 
now offers us 
deeper insights: if one begins sufficiently close to any of the initial values from the growing solutions \eqref{eq:explsol}, there will also be a grow-up in infinite time. 

We can show this for moderate backscatter $b \in (d,2d)$, which is only weakly destabilizing. 
In this case, the operator $\boldsymbol{B} = -(d\Delta^2 + b \Delta)$ has only one unstable eigenvalue, as noted at the beginning of \S\ref{s:PDE} since the zero mean condition makes no difference. See  Figure~\ref{f:specback}. 
Thus,  as discussed in \S\ref{s:PDE}, the linear operator $L = -(d\Delta^2 + b \Delta) - f P\Id^{\perp}$ determining the spectrum and linear stability of system \eqref{eq:2DNS-2} also possesses only one unstable eigenvalue since $L$ and $\boldsymbol{B}$ have the same eigenvalues. It then follows for $b \in (d,2d)$, that the image of unstable modes under the nonlinearity are stable modes only, i.e., there are no resonant interactions among unstable modes. This is an important element in the proof, and it would be interesting to relax this condition. 

In order to prove Theorem~\ref{t:stab2DNSE}, which is more precisely formulated in Theorem~\ref{thm:stabgrow} below, we consider separately the small- and large-scale parts of a solution whose initial value is sufficiently close to the initial values of \eqref{eq:explsol}.  In case $b < 2d = d\lambda_5$ 
we follow \citep{CFT88} and first 
derive an energy equation without nonlinear terms for the small-scale part. 
This allows to infer exponential decay of the small-scale part and thus 
boundedness in particular. 
Heuristically, $b < 2d$ 
and the absence of nonlinear terms in the energy equation imply that the global behavior of the small scale part of solutions is similar to the case when 
$b<d$ as discussed in \S\ref{s:wellposed}. 
In the second part of the proof, we establish a lower bound for the $L_2$-norm of the large-scale component by leveraging the boundedness of the small-scale part.  
It then turns out that the linear instability due to $b>d$ ensures that the lower bound grows exponentially over time, consequently causing the $L_2$-norm of the large-scale part to exhibit the same behavior. This concludes the proof of Theorem~\ref{thm:stabgrow}. 

{\blue In numerical simulations in turns out that even random initial data appear to behave as predicted by Theorem~\ref{t:stab2DNSE}. In Figure~\ref{f:NSsim} we plot a sample run that illustrates this behaviour. We found similar behaviour for some test cases with $b>2d$, and apparent convergence to steady large scale modes for $b=d$. }

\begin{figure}[t!]
\centering
\includegraphics[trim = 3cm 2cm 3cm 0cm, clip, width=\linewidth]{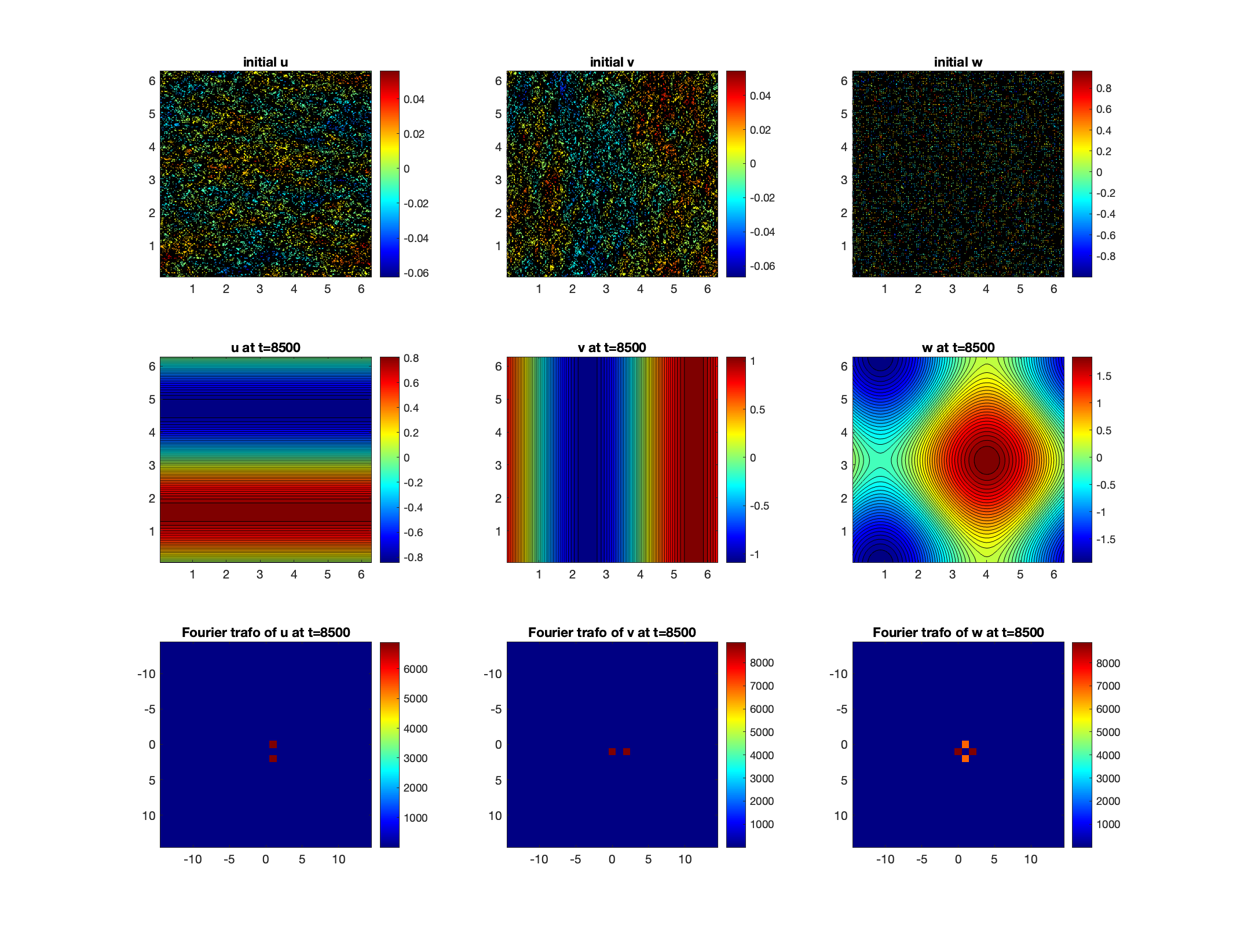}
\caption{\blue Simulation of 2D Euler equations with kinetic energy backscatter $b=0.0015$, $d=0.001$ on $[0,2\pi]^2$ with periodic boundaries by truncated Fourier series with $128\times128$ modes based on \citep{mitcode} with Crank-Nicholson time-step $0.1$. Top rows: initial data and final data at time $t=8500$ showing $u,v$ and vorticity $w$. Bottom row shows absolute values of Fourier coefficients for relatively large scale modes. The final state is essentially a linear combination of $\e_j$, $j=1,\ldots,4$.
}
\label{f:NSsim}
\end{figure}
 
\medskip
In preparation of the theorem formulation and proof, we note that
\begin{align}
\langle B(\u,\v), \v\rangle_2 &= 0 \qquad (\u,\v \in H_{\div}^{0,1}(\T^2,\R^2)), \label{eq:divZero} \\
\langle B(\u,\u), A\u\rangle_2 &= 0 \qquad (\u \in \operatorname{dom}(A)), \label{eq:NonlinLaplaceZero} \\
\langle B(\e_j,\e_k), \e_{\ell}\rangle_2 &= 0 \qquad (j,k,\ell \in \{1,2,3,4\}); \label{eq:EigVectZero}
\end{align}
where 
$\langle \boldsymbol{f},\boldsymbol{g} \rangle_2$ denotes the inner product on $L_2(\T^2,\R^2)$ for $\boldsymbol{f},\boldsymbol{g} \in L_2(\T^2;\R^2)$. The equality \eqref{eq:divZero} is a direct consequence of the fact that $\u$ is divergence free and \eqref{eq:EigVectZero} follows by a straightforward computation. A proof for \eqref{eq:NonlinLaplaceZero} can be found in \citep[Lemma 3.1]{T83}; 
we do not know of a similar result in 3D, which is one obstacle to extend Theorem~\ref{thm:stabgrow} as presented below to  3D models such as the Boussinesq or primitive equations. 

We define the aforementioned `large-scale' component of a function $\u \in L_{2,\div}^0(\T^2;\R^2)$ by
\[
\overline{\u} := \sum_{j=1}^4 \langle \u,\e_j\rangle_2 \e_j
\]
and the `small-scale' component by
\[
\u':= \sum_{j=5}^{\infty} \langle \u,\e_j\rangle_2 \e_j,
\]
where we adhere to the notation commonly used in, e.g., Reynolds averaging;  
notably, $\u = \overline{\u} + \u'$.

With these preparations, we can formulate the announced result, which contains Theorem~\ref{t:stab2DNSE}, as follows.

\begin{thm}\label{thm:stabgrow}
Assume that $b \in (d,2d) =(\lambda_1d,\lambda_5d)$, and let $\u_{\ast} \in \operatorname{lin} \{\e_1,\dots,\e_4\} \setminus \{0\}$. For any $\varepsilon \in (0,1/2)$ there exists $R_{\varepsilon} > 0$ such that for all 
\[
\u_0 \in B_{H^{0,2}}(\u_{\ast},R_{\varepsilon}) := \{ \v \in H_{\textnormal{div}}^{0,2}(\T^2;\R^2); \ \|\u_{\ast} - \v\|_{H^2(\T^2;\R^2)} < R_{\varepsilon} \}
\]
one has that
\begin{equation}\label{eq:expgrowth}
\|S(t)\u_0\|_2 \geq \|\overline{S(t)\u_0}\|_2 \geq \frac{1}{2\sqrt{2}}  \rme^{(1-2\varepsilon)(b-d) t}\|\u_{\ast}\|_2 \qquad (t \geq 0).
\end{equation}
Moreover, there is $c>0$ such that for all $\u_0 \in H_{\textnormal{div}}^{0,2}(\T^2;\R^2)$ one has that
\begin{equation}\label{eq:expdecay}
\|(S(t)\u_0)'\|_{H^1(\T^2;\R^2)} \leq c\,\rme^{-2(2d-b)t}\|\nabla \u_0'\|_2 \qquad (t \geq 0).
\end{equation}
\end{thm}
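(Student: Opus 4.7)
Following the outline given in the text, I split $\u = \overline{\u} + \u'$ with $\overline{\u}$ the $L_2$-orthogonal projection onto $\operatorname{lin}\{\e_1,\dots,\e_4\}$ and $\u'$ its complement, and establish \eqref{eq:expdecay} and \eqref{eq:expgrowth} in turn.

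\textbf{Step 1 (decay of $\u'$, giving \eqref{eq:expdecay}).} The starting point is the identity obtained by testing \eqref{eq:2DNS-3a} with $A\u$. For divergence-free $\u$ with zero mean on $\T^2$, the stream-function representation $\u = \nabla^\perp \psi$ makes $\u^\perp = -\nabla \psi$ a gradient, so $P(\u^\perp) = 0$ and the Coriolis term disappears after Leray projection; combined with \eqref{eq:NonlinLaplaceZero}, this yields the nonlinearity-free enstrophy equation à la CFT88,
\[
\tfrac{1}{2}\tfrac{\dif}{\dif t}\|\nabla \u\|_2^2 = b\|A\u\|_2^2 - d\|A^{3/2}\u\|_2^2.
\]
Splitting into scales via $A\overline{\u} = \overline{\u}$ and $\|\nabla \overline{\u}\|_2 = \|\overline{\u}\|_2$, and subtracting the $\overline{\u}$-equation derived in Step~2, an inequality $\tfrac{\dif}{\dif t}\|\nabla \u'\|_2^2 + 4(2d-b)\|\nabla \u'\|_2^2 \leq 0$ should emerge, exploiting the spectral gap $\|A^{3/2}\u'\|_2^2 \geq \lambda_5 \|A\u'\|_2^2 = 2\|A\u'\|_2^2 \geq 4\|\nabla \u'\|_2^2$, which uses $b < 2d = \lambda_5 d$. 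Integration yields the announced decay at rate $2(2d-b)$.

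\textbf{Step 2 (lower bound on $\overline{\u}$, giving \eqref{eq:expgrowth}).} Projecting \eqref{eq:2DNS-3a} onto $\operatorname{lin}\{\e_1,\dots,\e_4\}$, using $A\e_j = \e_j$, $P(\e_j^\perp) = 0$, and the key identity $B(\overline{\u},\overline{\u}) = 0$ on this 4-dimensional subspace (the advection $(\overline{\u}\cdot\nabla)\overline{\u}$ has vanishing curl when $\overline{\u} = (u_1(y),u_2(x))$ with $u_i'' + u_i = 0$, so on $\T^2$ with zero mean it is a gradient and is killed by $P$), one obtains
\[
\tfrac{\dif}{\dif t}\overline{\u} = (b-d)\overline{\u} - P_4 \bigl( B(\overline{\u},\u') + B(\u',\overline{\u}) + B(\u',\u') \bigr),
\]
where $P_4$ is the orthogonal projection onto $\operatorname{lin}\{\e_1,\dots,\e_4\}$. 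The nonlinear remainder is controlled in $L_2$ by $C\bigl(\|\overline{\u}\|_2 + \|\u'\|_{H^1}\bigr)\|\u'\|_{H^1}$ via 2D Sobolev embeddings, which by Step~1 is exponentially decaying faster than $\rme^{(b-d)t}$. For $\u_0 \in B_{H^{0,2}}(\u_\ast, R_\varepsilon)$ the triangle inequality bounds $\|\u_0'\|_{H^2}$; choosing $R_\varepsilon$ small in terms of $\varepsilon$, $\|\u_\ast\|_2$ and the constants from Step~1, a Duhamel-plus-bootstrap argument on the invariant 4-dimensional subspace keeps the Duhamel correction below an $\varepsilon$-fraction of $\rme^{(b-d)t}\overline{\u}_0$, delivering \eqref{eq:expgrowth} with prefactor $\tfrac{1}{2\sqrt{2}}$ and rate $(1-2\varepsilon)(b-d)$.

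\textbf{Main obstacle.} The delicate step is Step~1: in isolation, the $H^1$-energy identity for $\u'$ contains an obstructing trilinear term $\langle B(\u',\overline{\u}),\u'\rangle_2$, whose natural estimate picks up the exponentially growing factor $\|\overline{\u}\|_2$, so a direct Gronwall argument fails. The CFT88 device of working with the nonlinearity-free enstrophy equation for the full $\u$ and subtracting the $\overline{\u}$-equation absorbs this obstruction, but only thanks to the two identities $\langle B(\u,\u),A\u\rangle_2 = 0$ and $B(\overline{\u},\overline{\u}) = 0$. Both are specific to the 2D divergence-free setting and to this particular 4-dimensional unstable eigenspace; their failure in 3D is exactly what obstructs extending the argument to the Boussinesq or primitive equations, as noted after \eqref{eq:EigVectZero}.
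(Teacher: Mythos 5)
Your overall architecture (exponential decay of the small scales first, then a lower bound for the large scales fed by that decay) matches the paper's, and your Steps 2--3 are essentially the paper's steps (ii)--(iii) in sketch form. The problem is Step 1, which is the heart of the proof, and there the cancellation you invoke does not happen. The full enstrophy identity $\tfrac12\tfrac{\dif}{\dif t}\|\nabla\u\|_2^2 = b\|A\u\|_2^2 - d\|A^{3/2}\u\|_2^2$ is correct, but when you subtract the large-scale energy identity $\tfrac12\tfrac{\dif}{\dif t}\|\overline{\u}\|_2^2 = (b-d)\|\overline{\u}\|_2^2 - \langle B(\u',\u'),\overline{\u}\rangle_2$ (the term $\langle B(\overline{\u},\u'),\overline{\u}\rangle_2$ vanishes since $B(\overline{\u},\overline{\u})=0$), the trilinear term does not disappear: it reappears in the small-scale equation with opposite sign,
\begin{equation*}
\tfrac12\tfrac{\dif}{\dif t}\|\nabla\u'\|_2^2 = b\|A\u'\|_2^2 - d\|A^{3/2}\u'\|_2^2 - \langle B(\u',\overline{\u}),\u'\rangle_2 .
\end{equation*}
This is exactly the obstructing term you name in your last paragraph; its natural bound $\lesssim \|\overline{\u}\|_2\,\|\nabla\u'\|_2^2$ carries the exponentially growing factor $\|\overline{\u}\|_2$, so the claimed inequality $\tfrac{\dif}{\dif t}\|\nabla\u'\|_2^2 + 4(2d-b)\|\nabla\u'\|_2^2 \le 0$ only holds while $\|\overline{\u}\|_2$ stays below a fixed threshold, which it does not. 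Subtracting the $\overline{\u}$-equation relocates the obstruction; it does not absorb it.

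The device actually used (following Remark 3.1(ii) of \citep{CFT88}) is different: set $\w = \u - \e_1$ with $\e_1$ a \emph{fixed} eigenfunction of $A$ (using $B(\e_1,\e_1)=0$), write the energy identities for $\w$ tested with $\w$ and with $A\w$, and observe that, thanks to \eqref{eq:divZero}, \eqref{eq:NonlinLaplaceZero} and $A\e_1=\lambda_1\e_1$, the nonlinear contributions are $\langle B(\w,\e_1),\w\rangle_2$ and $\langle B(\w,A\e_1),\w\rangle_2 = \lambda_1\langle B(\w,\e_1),\w\rangle_2$ respectively. Hence the specific combination $\tfrac{\dif}{\dif t}\bigl(\|A^{1/2}\w\|_2^2 - \lambda_1\|\w\|_2^2\bigr)$ is genuinely free of nonlinear terms; since $\|A^{1/2}\w\|_2^2 - \lambda_1\|\w\|_2^2 = \sum_{j\ge5}(\lambda_j-\lambda_1)|\langle\u,\e_j\rangle_2|^2 \ge \tfrac12\|A^{1/2}\u'\|_2^2$ and the remaining quadratic form has a spectral gap for $b<\lambda_5 d$, Gr\"onwall gives the decay \eqref{eq:expdecay} unconditionally. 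Once that is in place, your Step 2 (whether run via Duhamel or, as in the paper, via a scalar differential inequality for $\|\overline{\u}\|_2^2$ with the Peter--Paul inequality) goes through. As it stands, though, your proof has a genuine gap at its most delicate point.
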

\begin{rem}
Concerning the rates in \eqref{eq:expgrowth} and \eqref{eq:expdecay}, recall that $b-d$ is the maximal linear growth rate of the linearization of \eqref{eq:2DNS-3} and $-2(2d-b)$ its slowest decay rate (for mean zero initial data).  
The arbitrarily small correction factor $2\eps$ in \eqref{eq:expgrowth} can be understood similar to the rate correction in un/stable manifolds in the presence of multiple eigenvalues. Establishing an unstable manifold would imply a version of Theorem~\ref{thm:stabgrow} that is local in the sense that it holds as long as $S(t)\u_0$ lies in the stable fibers of the unstable manifold; this would also hold for more general parameter values. However, Theorem~\ref{thm:stabgrow} holds \emph{globally in time}. The growth rate correction means that one cannot generally expect Lyapunov stability or asymptotic stability of the explicit unboundedly growing solutions $S(t)\e_j$, i.e., we cannot expect bounded or asymptotically vanishing 
 $\|S(t)\u_0 - S(t) \u_{\ast} \|_2$ for $\u_{\ast}=\e_j$, $j \in \{2,3,4\}$. 
\end{rem}

\begin{proof}[Proof of Theorem~\ref{thm:stabgrow}]
The proof is divided into three steps. In step (i) we will show that for $\u_0 \in H_{\textnormal{div}}^{0,2}(\T^2;\R^2)$ the `large-scales' 
of a global strong solution to \eqref{eq:2DNS-2} tend to $0$ as $t \rightarrow \infty$ at exponential rate, i.e., we show \eqref{eq:expdecay}. The proof of this follows the same argumentation as in Remark 3.1(ii) of \citep{CFT88}.

(i) Let $\u_0 \in H_{\textnormal{div}}^{0,2}(\T^2;\R^2)$, $\u:= S(\cdot)\u_0$, $\w_0:= \u_0 -\e_1$ and $\w:= \u - \e_1$. 
Then, using the fact that $B(\e_1,\e_1) = 0$, substitution into \eqref{eq:2DNS-2a} gives 
\begin{equation}\label{w-eq-1}
\partial_t \w + d A^2\w - b A\w + B(\w,\e_1) + B(\u,\w) + f\w^{\perp} = 0,
\end{equation}
We take the inner product in $L_2(\T^2;\R^2)$ of \eqref{w-eq-1} with $\w$ and $A\w$, respectively. Combining \eqref{eq:divZero}, \eqref{eq:NonlinLaplaceZero}, with $\langle \w^{\perp},\w\rangle_2 = 0$ and 
\[
\langle \w^{\perp},A\w\rangle_2 = - \sum_{j=1}^2\langle \p_j \w^{\perp},\p_j \w\rangle_2 = 0,
\]
we obtain
\begin{align}
&\frac{1}{2} \frac{\dif}{\dif t} \|\w\|_2^2 + d\|A\w\|_2^2 - b\|A^{1/2}\w\|_2^2 + \langle B(\w,\e_1),\w \rangle_2 = 0, \label{w-eq-2} \\ 
&\frac{1}{2} \frac{\dif}{\dif t} \|A^{1/2}\w\|_2^2 + d\|A^{3/2}\w\|_2^2 - b\|A\w\|_2^2 + \langle B(\w,A\e_1),\w \rangle_2 = 0. \label{w-eq-3} 
\end{align}
Multiplying \eqref{w-eq-2} by $\lambda_1$ and subtracting the resulting equation from \eqref{w-eq-3} then gives
\begin{align}
\frac{1}{2} \frac{\dif}{\dif t} (\|A^{1/2}\w\|_2^2 - \lambda_1\|\w\|_2^2) + 
d(\|A^{3/2}\w\|_2^2 - \lambda_1\|A^{1/2}\w\|_2^2) \label{w-eq-4}  
-b(\|A\w\|_2^2 - \lambda_1\|A^{1/2}\w\|_2^2)=0. 
\end{align}
Using the orthogonal basis we have 
\begin{align*}
d(\|A^{3/2}\w\| - \lambda_1\|A^{1/2}\w\|_2^2) - b(\|A\w\|_2^2 - \lambda_1\|A^{1/2}\w\|_2^2) 
= \sum_{j = 1}^{\infty} \lambda_j(d\lambda_j(\lambda_j - \lambda_1) - b (\lambda_j - \lambda_1)) |\langle \w,e_j \rangle_2|^2,
\end{align*}
and $\lambda_1 = \lambda_2 = \lambda_3 = \lambda_4$ means that the first four summands vanish. 
By the assumption that $b \in (\lambda_1d,\lambda_5d)$, the remaining coefficients satisfy 
\[
\lambda_j(d \lambda_j(\lambda_j - \lambda_1) - b (\lambda_j - \lambda_1)) 
\geq \lambda_5(d \lambda_5 - b) (\lambda_j - \lambda_1) \qquad (j\geq 5).
\] 
Therefore, using again the bases, we obtain the estimate
\begin{align}
d(\|A^{3/2}\w\| - \lambda_1\|A^{1/2}\w\|_2^2) - b(\|A\w\|_2^2 - \lambda_1\|A^{1/2}\w\|_2^2) 
&\geq  \lambda_5(d \lambda_5 - b)(\|A^{1/2}\w\|_2^2 - \lambda_1\|\w\|_2^2). \label{w-ineq-1}
\end{align}
Moreover, since $\langle \w,\e_j \rangle_2 = \langle \u,\e_j \rangle_2$ for all $j \geq 5$ and $\lambda_1 = 1$, $\lambda_5 = 2$, we also have that
\begin{align}
\|A^{1/2}\w\|_2^2 - \lambda_1\|\w\|_2^2 &= \sum_{j=5}^{\infty} (\lambda_j - \lambda_1)|\langle \w,\e_j\rangle_2|^2 \label{w-ineq-2} \\ 
&\geq \biggl(1 - \frac{\lambda_1}{\lambda_5}\biggr) \sum_{j=5}^{\infty} \lambda_j |\langle \u,\e_j\rangle_2|^2 
= \frac{1}{2}\|A^{1/2}\u'\|_2^2. \nonumber
\end{align}
Therefore, we deduce from \eqref{w-eq-4} and \eqref{w-ineq-1} that
\begin{equation}\label{w-ineq-3}
\frac{1}{2} \frac{\dif}{\dif t}(\|A^{1/2}\w\|_2^2 - \lambda_1 \|\w\|_2^2) + \lambda_5(\lambda_5d - b)(\|A^{1/2}\w\| - \lambda_1\|\w\|_2^2) \leq 0.
\end{equation}
Hence, Gronwall's inequality together with~\eqref{w-ineq-2} implies that
\begin{align}
\|A^{1/2}\u'\|_2^2 &\leq 2  (\|A^{1/2}\w\|_2^2 - \lambda_1\|\w\|_2^2)  
  \leq 2 \rme^{-2\lambda_5(d \lambda_5 - b)t}(\|A^{1/2}\w_0\|_2^2 - \lambda_1\|\w_0\|_2^2) \nonumber \\
 &\leq 2\rme^{-4(2d - b)t}\|A^{1/2}\u'_0\|_2^2 \leq 2\|A^{1/2}\u'_0\|_2^2 \qquad (t \geq 0). \label{u-bar-ineq-1}
\end{align}
This gives \eqref{eq:expdecay}.

\medskip
(ii) We will now prove a lower bound for $\|\overline{\u}\|_2^2$, which we will use in step (iii) to show \eqref{eq:expgrowth}. To start with, due to \eqref{eq:EigVectZero} we have
\[
\langle B(\u,\u),\e_j \rangle_2 = \langle B(\overline{\u},\u'),\e_j \rangle_2 + \langle B(\u',\overline{\u}),\e_j \rangle_2 + \langle B(\u',\u'),\e_j \rangle_2 \qquad (j=1,\dots, 4).
\] 
Thus, by projecting \eqref{eq:2DNS-2a} onto the large scale space (i.e.\ by computing the sum of $\langle\cdot, \e_j\rangle_2 \e_j$, $j=1,\ldots,4$ for each term) we find that
\begin{equation}\label{u-tilde-eq-1}
\partial_t \overline{\u} + d A^2\overline{\u} - b A\overline{\u} + \sum_{j=1}^4 (\langle B(\overline{\u},\u'),\e_j \rangle_2 + \langle B(\u',\overline{\u}),\e_j \rangle_2 + \langle B(\u',\u'),\e_j \rangle_2)\e_j  + f\overline{(\u^{\perp})} = 0.    
\end{equation}
We next take the inner product in $L_2(\T^2;\R^2)$ of \eqref{u-tilde-eq-1} with $\overline{\u}$. By \eqref{eq:divZero}, we obtain
\begin{equation*}
\frac{1}{2} \frac{\dif}{\dif t} \|\overline{\u}\|_2^2 + d \|A\overline{\u}\|_2^2 - b \|A^{1/2}\overline{\u}\|_2^2 + \langle B(\overline{\u},\u'),\overline{\u}\rangle_2 + \langle B(\u',\u'),\overline{\u}\rangle_2 + f\langle \overline{(\u^{\perp})},\overline{\u} \rangle_2 = 0,
\end{equation*}
which, due to $\lambda_1 = \lambda_2 = \lambda_3 = \lambda_4$, is equivalent to
\begin{equation}\label{u-tilde-eq-2}
\frac{1}{2}\frac{\dif}{\dif t}\|\overline{\u}\|_2^2 + \lambda_1(d \lambda_1 - b) \|\overline{\u}\|_2^2 + \langle B(\overline{\u},\u'),\overline{\u}\rangle_2 + \langle B(\u',\u'),\overline{\u}\rangle_2 + f\langle \overline{(\u^{\perp})},\overline{\u} \rangle_2 = 0  
\end{equation}
Since $\|A^{1/2}\v\|_2^2 = \int_{\T^2}  |\nabla \v|^2 \,\dif\x$ for all $\v \in H_{\div}^{0,1}(\T^2;\R^2)$ and the fact that $|\e_j| \leq 1 \ (j = 1,\dots,4)$  (see \eqref{eq:eigfunct}) implies that $\|\overline{\v}\|_{\infty} \leq 2\|\overline{\v}\|_2$ for all $\v \in L_{\div,2}^0(\T^2;\R^2)$, we can estimate
\begin{align}
&|\langle B(\overline{\u}, \u'),\overline{\u} \rangle_2| \leq \|\overline{\u}\|_{\infty} \|\nabla \u'\|_2 \|\overline{\u}\|_2 \leq \label{nonlin-eq-1} 2\|A^{1/2}\u'\|_2 \|\overline{\u}\|_2^2, \\
&|\langle B(\u', \u'),\overline{\u} \rangle_2| \leq \|\u'\|_2\|\nabla \u'\|_2 \|\overline{\u}\|_{\infty} \leq 2\lambda_1^{-1/2}\|A^{1/2}\u'\|_2^2 \|\overline{\u}\|_2. \label{nonlin-eq-2}
\end{align}
Moreover, since $\overline{\v_1}$ and $\v_2'$ are orthogonal in $L_2(\T^2;\R^2)$ and $\langle \v^{\perp},\v \rangle_2 = 0$ for all $\v,\v_1,\v_2 \in L_{2,\div}^0(\T^2;\R^2)$, we have
\begin{equation}\label{eq:rotest}
|\langle \overline{(\u^{\perp})},\overline{
\u} \rangle_2| = |\langle \u^{\perp},\overline{
\u} \rangle_2| = |\langle (\u')^{\perp},\overline{\u}\rangle_2| \leq \|\u'\|_2\|\overline{\u}\|_2 \leq \lambda_1^{-1/2}\|A^{1/2}\u'\|_2 \|\overline{\u}\|_2    
\end{equation}
Hence, we deduce from \eqref{u-tilde-eq-2}, \eqref{nonlin-eq-1} \eqref{nonlin-eq-2}, \eqref{eq:rotest}, \eqref{u-bar-ineq-1}, the fact that $\lambda_1 = 1$ and the Peter-Paul inequality (i.e. $a_1a_2 \leq \frac{1}{2}(\gamma a_1^2 + \gamma^{-1} a_2^2)$ for all $a_1,a_2 \in \R$, $\gamma > 0$) that
\begin{align*}
&\frac{1}{2}\frac{\dif}{\dif t} \|\overline{\u}\|_2^2 + \biggl((1-\varepsilon)(d - b) + 2\sqrt{2}\|A^{1/2}\u'_0\|_2\biggr)\|\overline{\u}\|_2^2  \\ 
&\geq - \frac{2}{\varepsilon(b - d)}\biggl(4 \|A^{1/2} \u'_0\|_2^2 + \frac{f^2}{2} \biggr)\|A^{1/2} \u'_0\|_2^2
\end{align*}
for all $\varepsilon > 0$. Defining
\[
\omega_{\varepsilon}(r):= 2(1-\varepsilon)(d - b) + 4\sqrt{2}r, \qquad 
c_{\varepsilon}(r):= \frac{4}{\varepsilon(b - d)}\biggl(4r^2 + \frac{f^2}{2} \biggr)r^2, 
\qquad (\eps>0, \; r \geq 0),
\]
we infer, if  $\omega_{\varepsilon}(\|A^{1/2}\overline{\u_0}\|_2) \neq 0$, the estimate (for all $t\geq 0$)
\begin{equation}\label{u-tilde-ineq-1}
\|\overline{\u}\|_2^2 \geq \rme^{-\omega_{\varepsilon}(\|A^{1/2}\u'_0\|_2) t}\|\overline{\u_0}\|_2^2 + \frac{c_{\varepsilon}(\|A^{1/2}\u_0'\|_2)}{\omega_{\varepsilon}(\|A^{1/2}\u'_0\|_2)}(\rme^{-\omega_{\varepsilon}( \|A^{1/2}\u'_0\|_2)t} - 1).
\end{equation}

\medskip
(iii) Using step (i) and (ii), we next show that \eqref{eq:expgrowth} holds, which completes the proof. We set $\gamma:=\|\u_{\ast}\|_2^2 \ (> 0)$ and fix $\varepsilon \in (0,1/2)$. It is straightforward that there exists a $r_{\varepsilon}' > 0$ with 
\begin{equation}\label{c-omega-gamma-est}
\omega_{\varepsilon}(r) < 0 ,\qquad \frac{c_{\varepsilon}(r)}{\omega_{\varepsilon}(r)} \geq - \frac{\gamma}{16} \qquad (0 \leq r \leq r_{\varepsilon}').
\end{equation}
We now set $r_{\varepsilon}:= \min\{r_{\varepsilon}',\varepsilon(b-d)/2\sqrt{2}\}$ and 
$R_\eps:= \min \{r_{\varepsilon}/2,\gamma^{1/2}/4\}$, so that by assumption
\begin{equation}\label{u_0-assump}
\u_0 \in B_{H^{0,2}}(\u_{\ast},\min \{r_{\varepsilon}/2,\gamma^{1/2}/4\}).
\end{equation}
This implies 
\[
\|\u_{\ast}\|_2^2 \leq 4\|\overline{\u_0}\|_2^2 + \|\u_{\ast} - \overline{\u_0}\|_2^2  \leq 4\|\overline{\u_0}\|_2^2 + 4\|\u_{\ast} - \u_0\|_2^2 \leq 4\|\overline{\u_0}\|_2^2 + \frac{\gamma}{4}
\]
and hence
\begin{equation}\label{w_0-ineq-2}
\frac{1}{4} \biggl(\|\u_{\ast}\|_2^2 - \frac{\gamma}{4}\biggr) \leq \|\overline{\u_0}\|_2^2.    
\end{equation}
Moreover, \eqref{u_0-assump} yields that 
\begin{align}
\|A^{1/2}\u'_0\|_2 = \|A^{1/2}\u_0 - A^{1/2}\overline{\u_0}\|_2  &\leq \|A^{1/2}(\u_0 - \u_{\ast})\|_2 + \|A^{1/2}(\u_{\ast} - \overline{\u_0})\|_2 \nonumber \\ &= 2\|A^{1/2}(\u_0 - \u_{\ast})\|_2 \leq r_{\varepsilon}. \label{u-bar-ineq-2}
\end{align}
Therefore, using \eqref{u-tilde-ineq-1} with \eqref{c-omega-gamma-est}, and then \eqref{w_0-ineq-2} as well as \eqref{u-bar-ineq-2}, we infer for all $t \geq 0$ that

\begin{align*}
\|\overline{\u}\|_2^2 &\geq \rme^{-\omega_{\varepsilon}(\|A^{1/2}\u'_0\|_2) t}\|\overline{\u_0}\|_2^2  + \frac{c_{\varepsilon}(\|A^{1/2}\u_0'\|_2)}{\omega_{\varepsilon}(\|A^{1/2}\u_0'\|_2)}(\rme^{-\omega_{\varepsilon}( \|A^{1/2}\u'_0\|_2)t} - 1), \tag*{} \\
&\geq \rme^{-\omega_{\varepsilon}(\|A^{1/2}\u'_0\|_2) t}\biggl(\|\overline{\u}_0\|_2^2 - \frac{\gamma}{16} \biggr) \\ 
&\geq \frac{1}{4}\rme^{-\omega_{\varepsilon}(\|A^{1/2}\u'_0\|_2) t} \biggl(\|\u_{\ast}\|_2^2 -  \frac{\gamma}{2}\biggr) \geq \frac{1}{8} \|\u_{\ast}\|_2^2 \rme^{-\omega_{\varepsilon}(r_0)t} \geq \frac{1}{8} \|\u_{\ast}\|_2^2 \rme^{2(1-2\varepsilon)(b-d)t}
\end{align*}
where in the last step we used the inequality $\omega_{\varepsilon}(r_{\varepsilon}) \leq \omega_{\varepsilon}(\varepsilon(b-d)/2\sqrt{2}) = 2(1-2\varepsilon)(d-b)$. (Also recall $\gamma=\|\u_{\ast}\|_2^2$.)
This completes the proof.
\end{proof}

\section{Bottom drag and bifurcations}\label{s:bottom}

In this section we review results from \citep{PRY2023} concerning the impact of combined backscatter and bottom drag in \eqref{e:sw}. For nonlinear drag $Q\neq 0$ mass conservation implies that unbounded growth cannot occur. It turns out that the growth of $\vh$ can saturate in solutions that bifurcate from the trivial steady flow $(\vh,\eta)= (0,0)$ at a critical linear drag parameter $\lb$. However, as mentioned, due to the missing spectral gap {\blue at large wavenumbers} from \eqref{e:speczero}, the bifurcation problem is non-standard{, and one cannot expect a finite dimensional reduction of the entire dynamics even on spaces of periodic functions}. {\blue Nevertheless, the specific bifurcations we review here can still be rather directly analyzed using Lyapunov-Schmidt reduction.  
Briefly, in \S\ref{s:bif} below we consider steady bifurcations that occur in an invariant subspace within which the spectrum is parabolic. For the non-steady bifurcations considered in \S\ref{s:igw}  Fredholm properties can be established on spaces of periodic functions as detailed in \citep{PRY2023}. 
Notably, we do not claim that arbitrary perturbation from the trivial state converges to these bifurcating flows.}

Initial insight into the impact of combined backscatter and bottom drag can be gained from the ansatz \eqref{sol: RSWB1}. 
The analogue of \eqref{cond: RSWB1} in case of isotropic backscatter $d=d_1=d_2$, $b=b_1=b_2$ is
\begin{subequations}\label{e:plane}
\begin{align}
f\psi &= g\partial_\xi\phi,\label{e:plane2a}\\
\partial_t \psi &= -d k^4\partial_\xi^4\psi - b k^2\partial_\xi^2\psi - \frac{C+Qk|\psi|}{H_0+\phi}\psi, \label{e:plane2b}
\end{align}
\end{subequations}
where $k:=|\k|$ is the wave number. Here \eqref{e:plane2a} is the geostrophic balance relation \eqref{cond: RSWB1b} in the isotropic case, but \eqref{e:plane2b} contains the nonlinearity resulting from the drag $\F$ for $Q\neq 0$ or $C\neq 0$.
\begin{rem}[Anisotropy]\label{r:anisoscalar}
In the anisotropic case $b_1\neq b_2$ or $d_1\neq d_2$, 
we can still reduce \eqref{e:sw} to \cref{e:plane}, when replacing $b,d$ by $b_2,d_2$ (or $b_1,d_1$) and choosing the wave vector $\k=(1,0)$ (or $\k=(0,1)$) in \cref{sol: RSWB1}.
\end{rem}
From \cref{e:plane2a} it follows that also $\psi$ is independent of $t$ unless $f=0$. For $f=0$ we have $\phi=\phi_0\in\R$ is constant, and \cref{e:plane2b} has the form of a non-smooth quadratic Swift-Hohenberg-type equation with parameter $\phi_0$. 
This naturally admits a weakly nonlinear analysis near the trivial state $\psi=0$ within the specific class of solutions. For $Q=0$ we can employ \eqref{sol: RSWB2} with $\alpha_2=0$ and without loss mean zero $s=0$. In particular, unbounded growth still occurs for $Q=0$ and moderate linear drag $\lb>0$; although for $f\neq0$ this requires anisotropic backscatter. We plot some resulting solution loci in Figure~\ref{f:growth} and refer the reader to \citep{PRY2023} for details.

\begin{figure}[t!]
\centering
\subfigure[$\lb=0.11$]{\includegraphics[trim = 5.5cm 8.5cm 6cm 8.5cm, clip, width=0.24\linewidth]{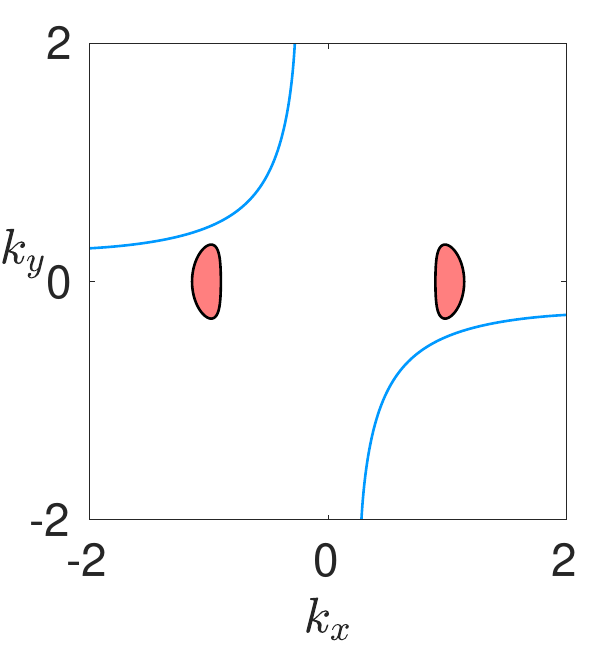}\label{f:anisoC0p11}}
\hfil
\subfigure[$\lb=0.08$]{\includegraphics[trim = 5.5cm 8.5cm 6cm 8.5cm, clip, width=0.24\linewidth]{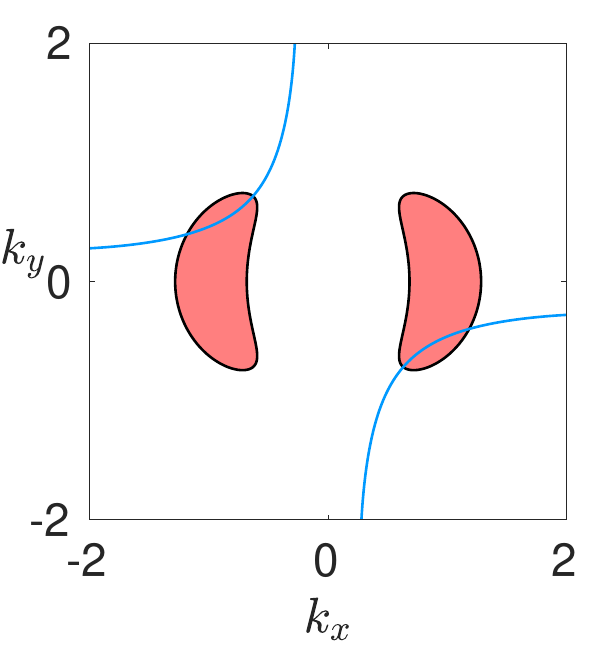}\label{f:anisoC0p08}}
\hfil
\subfigure[$\lb=0.05$]{\includegraphics[trim = 5.5cm 8.5cm 6cm 8.5cm, clip, width=0.24\linewidth]{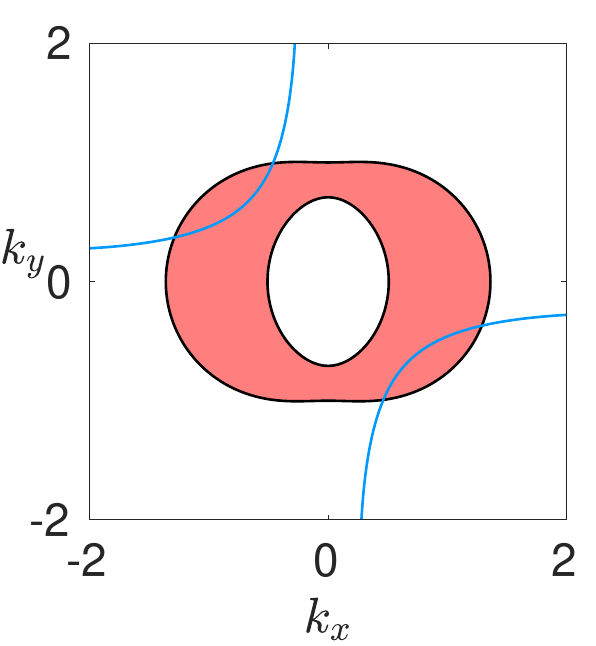}\label{f:anisoC0p05}}
\hfil
\subfigure[$\lb=0$]{\includegraphics[trim = 5.5cm 8.5cm 6cm 8.5cm, clip, width=0.24\linewidth]{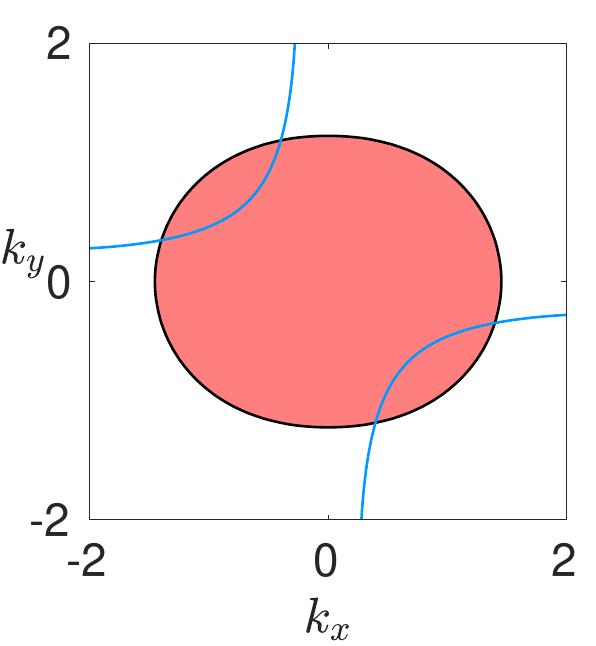}\label{f:anisoC0}}
\caption{(Fig.~4 in \citep{PRY2023}.) Samples of the loci of explicit flows \eqref{sol: RSWB2} (red: $\lambda>0$; white: $\lambda<0$; black: $\lambda=0$ (except $\k=0$)) with $\alpha_2=s=0$, $d_1=1$, $d_2=1.04$, $b_1=1.5$, $b_2=2.2$, $f=0.3$, $g=9.8$, $H_0=0.1$, $Q=0$ so that $\lb_\crit\approx0.116$. Blue: the wave vectors satisfying \eqref{cond: RSWB2b} with $\alpha_2=0$. The intersections of blue and black curves are loci of steady flows, those within red regions are loci of unboundedly growing flows.}
\label{f:growth}
\end{figure}

\subsection{Onset of instabilities}\label{s:Turing}

As discussed in \S\ref{s:spec} the onset of instability occurs at $\lb=\lb_\crit$ with critical wavenumber $\k=\k_\crit$ from \eqref{e:isocritCk}, and the critical modes can be interpreted as geostrophically balanced and  gravity waves. This is also reflected in the eigenvectors of the matrix $\hat\calL$; in this matrix only the first two diagonal entries depend on backscatter and in the present isotropic case these entries are $-F$ as defined before \eqref{e:isocritCk}. 
Hence, the critical case $F=0$ gives the same matrix as the inviscid case and thus the same eigenvectors. 

For single steady modes, that lead to geostrophic equilibria, due to the rotation symmetry any wave vector direction can be selected in order to reduce \eqref{e:sw} to a 1D plane wave problem. This always gives \cref{e:plane} and admits an analysis of the stationary bifurcation problem independent of the spectral gap problem.  
We choose $\x = (x,0)$ on the rescaled domain $[0,2\pi]$ and restrict $\calL$ to this 1D situation. 
The kernel eigenvectors of $\calL_\crit$ at $k = k_\crit$ are 
$\tee_j:=\tE_j \rme^{\rmi j x}$, $\tE_j\in\C^3$ and $\tE_{-j} = \overline{\tE}_j$, for $j=0,\pm1$, and 
\begin{equation}\label{e:rossbyeigenvector}
\tE_j = 
\begin{pmatrix}
0\\ 1\\ -\rmi j f/(g k_\crit)
\end{pmatrix},
\; j=\pm 1, \quad 
\tE_0 = 
\begin{pmatrix}
0\\ 0\\ 1
\end{pmatrix}.
\end{equation}
As a consequence of mass conservation, {\blue $\tee_0=\tE_0$} is in the kernel of $\calL$ for all $\lb$. The first two components of $\tE_{\pm 1}$ are orthogonal to $\k=(k_x,0)$, which is in line with the reduction to \cref{e:plane}, cf.\ Remark \cref{r:anisoscalar}. These eigenvectors depend on the bottom drag parameters only through $k_\crit$ from \cref{e:isocritCk}, which can be written as $\lb_\crit = d H_0k_\crit^4$. As mentioned, $\calL$ for $d=b=\lb=0$ possesses the same kernel eigenvectors, even for general $\k$, and the kernel eigenvectors $\tee_{\pm1}$ correspond to geostrophically balanced modes.

Oscillatory modes become steady in a comoving frame $\x - \c t$ with suitable constant $\c$. 
This change of coordinates creates the additional term $-\c\cdot\nabla (\vh,\eta)$ on the left-hand side of \cref{e:sw} so that the dispersion relation \cref{e:disp} turns into 
\begin{equation}\label{e:dispcomov}
\disp_{\c}(\lambda, \k) = \disp(\lambda - \rmi\c \cdot\k,\k). 
\end{equation}
This means that exactly the purely imaginary spectrum at $C=\lb_\crit$ from \cref{e:isocritCk} is shifted to the origin for $\omega =\pm\omega_\crit$ with $\c=-\omega \k_\crit/|\k_\crit|^2$ and any $\k_\crit = (\kcx,\kcy)$ satisfying $|\k_\crit|=k_\crit$ from \eqref{e:isocritCk}. Hence, a 1D reduction to $2\pi$-periodicity yields the phase variable 
\[
\zeta=(\x-\c t)\cdot\k_\crit = \kcx x+\kcy y + \omega t.
\]
and $\partial_x = \kcx\partial_\zeta$, $\partial_y=\kcy\partial_\zeta$, and $\partial_t$ becomes $\partial_t + \omega\partial_\zeta$ so that $\calL_\crit$ in terms of $\zeta$ becomes
\begin{align}\label{e:HopfOp}
\calL_\crit = \begin{pmatrix}
\omega_\crit\partial_\zeta- d k_\crit^4(\partial_\zeta^2+1)^2 & f & -\kcx g \partial_\zeta\\
-f & \omega_\crit\partial_\zeta- d k_\crit^4(\partial_\zeta^2+1)^2 & -\kcy g \partial_\zeta\\
-\kcx H_0 \partial_\zeta & -\kcy H_0 \partial_\zeta  & \omega_\crit\partial_\zeta
\end{pmatrix}.
\end{align}
Here we used 
$k_\crit^4 d \partial_\zeta^4 + k_\crit^2 b \partial_\zeta^2 + \lb_\crit/H_0 = d k_\crit^4(\partial_\zeta^2+1)^2$  
for $\omega=-\omega_\crit$. 
The kernel of this $\calL_\crit$ is also three-dimensional, spanned by $\ee_0=\tee_0$ from \cref{e:rossbyeigenvector} and 
$\ee_j:=\E_j \rme^{\rmi j \zeta}$, $\E_j\in\C^3$, $j=\pm1$, where 
\[
\E_j = \begin{pmatrix}
\omega_\crit\kcx + j\rmi f\kcy \\
\omega_\crit\kcy - j\rmi f\kcx \\
k_\crit^2 H_0
\end{pmatrix}.
\]
The case $\omega=\omega_\crit$ is analogous and again the eigenvectors $\ee_{\pm1}$ depend on the bottom drag only through $\k_\crit$. 
The structure of $\E_j,\,j=\pm 1$ is inconsistent with the plane wave form \cref{sol: RSWB1} so that a bifurcation analysis requires to consider the full system. 
Consistent with the above interpretation, these kernel eigenvectors correspond to gravity waves.

\subsection{Bifurcation of nonlinear geostrophic equilibria}\label{s:bif}

Bifurcations arise from the kernel of $\calL_\crit$ that was discussed above. We next assume isotropic backscatter, which is without loss concerning bifurcations from the purely steady or purely oscillatory eigenmodes, but not combinations. Regarding steady state bifurcations, using \eqref{e:plane} in \eqref{sol: RSWB1} we thus look for geostrophic equilibrium (GE) solutions to \cref{e:sw} of the form 
\begin{equation}\label{e:special}
\vh = \partial_\xi \phi(\k\cdot\x)\k^\perp,\quad \eta =\frac f g\phi(\k\cdot\x),\quad |\k| \approx k_\crit,
\end{equation}
for wave shape $\phi$ and phase variable $\xi=\k\cdot\x$.  
This ansatz reduces \eqref{e:plane} to the steady state equation
\begin{align}\label{e:GE}
0=d k^4\partial_{\xi}^5\phi+b k^2\partial_{\xi}^3\phi+\frac{C+Qk|\partial_\xi\phi|}{H_0+ f\phi/g}\partial_{\xi}\phi, \quad  k=|\k| .
\end{align}
With the bifurcation parameter $\alpha$ such that $\lb = \lb_\crit-\alpha H_0$ we include changes in wave number such that $|\k| = k_\crit+\kappa$ with $\kappa\approx 0$ in the parameter vector $\mu = (\alpha,\kappa)\approx(0,0)$ and expand $\lambda = \lambda(\mu)$ solving \cref{e:disp} with $\lambda(0) = 0$ 
as
\begin{equation}\label{e:zerospec}
\lambda(\mu) =  M \big(\alpha - 2b\kappa^2 + \calO(|\kappa|^3)\big), 
\end{equation}
where 
$M:= gk_\crit^2H_0/\omega_\crit^2>0$. Hence, the trivial state $(\vh,\eta)=(0,0)$ (i.e.\ $\phi(\xi)\equiv0$) is unstable for $\alpha > 2b\kappa^2$ and the stability boundary is given by $\alpha = 2b\kappa^2$ at leading order. Since the nonlinear drag force $\F$, i.e., the nonlinearity in \eqref{e:GE} is not smooth for $Q\neq0$, we need to distinghuish $Q=0$ and $Q\neq0$. However, it turns out that we can use the same method to derive bifurcation equations. 

\begin{thm}[Bifurcation of GE \citep{PRY2023}]\label{t:bifQ0}
Let $\alpha,\kappa\in\R$ be sufficiently close to zero, and let $\k\in\R^2$ with $|\k| =k_\crit+\kappa$. 
Consider steady plane wave-type solutions to \cref{e:sw} of the form \cref{e:special} 
with $2\pi$-periodic mean zero $\phi$ and $\xi=\k\cdot\x$, i.e., solutions to \eqref{e:GE}.

These are (up to spatial translations) in one-to-one correspondence with solutions $A_1\in\R$ near zero of
the following algebraic equations:
\begin{align}
&Q=0: \quad 0 = A_1\left(
\alpha  -  2b\kappa^2  - \frac{17b^2 f^2}{72 d g^2 H_0^2}A_1^2 + \calR_\s \right),\label{e:bifQ0}\\
&Q\neq 0: \quad 0 = A_1\left( 
\alpha - 2b\kappa^2 - \frac{16 Q \sqrt b}{3\pi H_0 \sqrt{2d}}|A_1| +  \calR_\ns \right),\label{e:bifQn0}
\end{align}
with remainder terms 
\begin{align*}
\calR_\s &= f \calO(|A_1|^2 (|A_1|^2+|\alpha| +|\kappa|)) +\calO(|\kappa|^3),\\
\calR_\ns &= Q f \calO(|A_1|^2)+(Q+f)\calO(|A_1|(|A_1|+|\alpha|+|\kappa|))+\calO(|\kappa|^3).
\end{align*} 

\noindent In addition, the profiles $\phi(\xi) = \phi_\s(\xi;\mu)$ for $Q=0$ and $\phi(\xi) = \phi_\ns(\xi;\mu)$ for $Q\neq 0$ satisfy
\begin{align*}
\phi_\s(\xi;\mu) &= 2A_1\cos(\xi) + \frac{f}{9 g H_0}A_1^2 \cos(2\xi) + f\calO\left(|A_1|^2 (|A_1| +|\alpha|+|\kappa|)\right),\\
\phi_\ns(\xi;\mu) &= 
2A_1\cos(\xi) + \calO(|A_1|(|A_1| + |\alpha| +|\kappa|)).
\end{align*}
\end{thm}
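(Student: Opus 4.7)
My plan is to treat \eqref{e:GE} as a scalar bifurcation problem in $\phi$ on a Sobolev space $X$ of $2\pi$-periodic mean-zero functions (say $H^5$), with parameters $(\alpha,\kappa)$ encoded through $C = \lb_\crit - \alpha H_0$ and $k = k_\crit + \kappa$, and apply Lyapunov--Schmidt reduction. The linearization at $\phi = 0$, $\alpha = \kappa = 0$ reads
\[
L_0 = \partial_{\xi}\bigl[d k_\crit^4 \partial_\xi^4 + b k_\crit^2 \partial_\xi^2 + \lb_\crit/H_0\bigr] = \tfrac{b^2}{4d}\,\partial_\xi (\partial_\xi^2 + 1)^2,
\]
using the identities collected in \S\ref{s:Turing}. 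On the mean-zero periodic space, $\ker L_0 = \operatorname{span}\{\cos\xi,\sin\xi\}$ and $L_0$ is Fredholm of index zero, so the spectral-gap issue highlighted in \S\ref{s:spec} is sidestepped because the ansatz \eqref{e:special} has already reduced the problem to a one-dimensional ODE. Translation invariance in $\xi$ lets me fix the phase, collapsing the kernel to the real direction spanned by $2\cos\xi$, parameterized by $A_1 \in \R$.

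Then I would write $\phi = 2 A_1 \cos\xi + \psi$ with $\psi$ in the range complement (orthogonal to $\cos\xi,\sin\xi$), split \eqref{e:GE} into its projections onto the kernel and the range, and solve the range equation for $\psi = \psi(A_1,\alpha,\kappa)$. In the smooth case $Q = 0$, the usual IFT yields an analytic $\psi_\s$ with $\psi_\s(0,0,0) = 0$. Expanding $1/(H_0 + f\phi/g) = H_0^{-1} - f\phi/(g H_0^2) + \cdots$, the quadratic contribution $-\lb_\crit f/(g H_0^2)\,\phi\,\partial_\xi\phi$ evaluated at $\phi = 2 A_1 \cos\xi$ produces a pure $\sin(2\xi)$ source which, using $L_0 \cos(2\xi) = -\tfrac{9 b^2}{2d}\sin(2\xi)$ together with $\lb_\crit = b^2 H_0/(4d)$, inverts to the announced second-harmonic correction $\tfrac{f}{9 g H_0} A_1^2 \cos(2\xi)$. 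The $A_1$-linear pieces of the kernel projection come directly from \eqref{e:zerospec}, and the cubic coefficient $-17 b^2 f^2/(72 d g^2 H_0^2)$ is obtained by collecting all order-$A_1^3$ contributions in the projection onto $\sin\xi$: the cube $\phi^2 \partial_\xi \phi$ from the second-order expansion of $1/(H_0+f\phi/g)$, and the cross-coupling between $2 A_1 \cos\xi$ and the $\cos(2\xi)$ correction through the quadratic term.

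For $Q \neq 0$ the principal obstruction is that $v \mapsto |v| v$ is only $C^{1,1}$, so the analytic IFT must be replaced by its Lipschitz variant (or, equivalently, a contraction-mapping argument for the range equation); this still yields a $C^1$ branch $\psi_\ns$, with dependence that is jointly smooth in $(A_1^2,\alpha,\kappa)$ and Lipschitz in $|A_1|$. The explicit $|A_1| A_1$ coefficient comes from inserting $\partial_\xi \phi = -2 A_1 \sin\xi + \calO(A_1^2)$ into $|\partial_\xi\phi|\partial_\xi\phi = -4 A_1 |A_1|\,\sgn(\sin\xi)\sin^2\xi$ and using
\[
\frac{2}{\pi}\int_0^\pi \sin^3\xi\,\dif\xi = \frac{8}{3\pi}
\]
for the first sine-Fourier coefficient of $\sgn(\sin\xi)\sin^2\xi$; combined with the prefactor $Q k_\crit/H_0$ and $k_\crit = \sqrt{b/(2d)}$ this produces the announced $\frac{16 Q \sqrt{b}}{3\pi H_0 \sqrt{2d}}|A_1|$ term.

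The step I expect to be the main obstacle is rigorously justifying the Lyapunov--Schmidt reduction in the non-smooth $Q \neq 0$ case and showing that the remainder $\calR_\ns$ really has the claimed splitting into pieces of orders $Qf\calO(|A_1|^2)$, $(Q+f)\calO(|A_1|(|A_1|+|\alpha|+|\kappa|))$, and $\calO(|\kappa|^3)$. The technical point is that although $|\partial_\xi \phi|$ is not differentiable at zeros of $\partial_\xi\phi$, for $A_1 \neq 0$ these zeros form a pair of simple, smoothly moving points, so the $C^{1,1}$ chain rule applies and Parseval's identity delivers the required estimates on the projection onto $\sin\xi$; all other contributions are absorbed into $\calR_\ns$. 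Once $\psi_\s$ and $\psi_\ns$ are constructed and the reduced equation is obtained, the profile expansions of $\phi_\s$ and $\phi_\ns$ follow by substituting back into $\phi = 2 A_1 \cos\xi + \psi$.
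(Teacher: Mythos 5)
Your proposal is correct and follows essentially the route the paper indicates (reduction to the scalar plane-wave ODE \eqref{e:GE} followed by Lyapunov--Schmidt on $2\pi$-periodic mean-zero functions, with the even/odd symmetry fixing the phase and leaving the projection onto $\sin\xi$ as the bifurcation equation): the identities $L_0=\tfrac{b^2}{4d}\partial_\xi(\partial_\xi^2+1)^2$, $L_0\cos(2\xi)=-\tfrac{9b^2}{2d}\sin(2\xi)$, and $\lb_\crit=b^2H_0/(4d)$ reproduce the second-harmonic correction $\tfrac{f}{9gH_0}A_1^2\cos(2\xi)$, the combination of the direct cubic term and the fundamental/second-harmonic coupling gives $-\tfrac{1}{2}+\tfrac{1}{36}=-\tfrac{17}{36}$ and hence the coefficient $-\tfrac{17b^2f^2}{72dg^2H_0^2}$ after normalizing by the factor $2$ from the linear term $2A_1(\alpha-2b\kappa^2)$, and the Fourier coefficient $\tfrac{8}{3\pi}$ of $|\sin\xi|\sin\xi$ yields $\tfrac{16Q\sqrt b}{3\pi H_0\sqrt{2d}}$. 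The treatment of the $C^{1,1}$ nonlinearity via the Lipschitz implicit function theorem is also the appropriate device for the $Q\neq 0$ case.
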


\paragraph{Smooth case $Q=0$.} 
The bifurcation is a supercritical pitchfork for $f\neq0$, since the coefficient of $A_1^3$ in \cref{e:bifQ0} is negative and the zero state is unstable for $\alpha > 2b\kappa^2$, cf.\ Figure \ref{f:ampalp}. The leading order amplitude of bifurcating states is given by 
\[
|A_1| = \frac{6gH_0}{b|f|}\sqrt{\frac{2 d}{17}(\alpha - 2b \kappa^2 )}.
\]
On the one hand, for fixed $0<\alpha\ll1$ the amplitudes $A_1=A_1(\kappa)$, for $\kappa\in(-\kappa_0,\kappa_0)$ with $\kappa_0:=\sqrt{\alpha/(2b)}$, form a semi-ellipse with maximum at $\kappa=0$. On the other hand, amplitudes are proportional to $\sqrt{d\alpha}$ and inversely proportional to $bf$. In particular, existence requires nonzero backscatter. 
The amplitude diverges pointwise in the following cases: (1) as $b\to 0$ for each $\kappa\in(-\kappa_0,\kappa_0)$ (where $\kappa_0$ remains constant if the drag $\alpha$ scales as the backscatter term $b$); (2) for \emph{equally} small backscatter parameters $(d,b)=(\eps\hat d,\eps\hat b)$, as $\eps\to 0$, with $|A_1| = \calO(\eps^{-1/2})$ (the wave number is fixed at $k=k_\crit = (\hat b/(2\hat d))^{1/2}$); (3) as $f\to 0$, as illustrated in Figure \ref{f:amp} ($\eta=f\phi/g$ is bounded). 

At $f=0$, the leading order bifurcation equation is given by $0 = A_1(\alpha - 2b\kappa^2)$, 
so that the value of $A_1$ can be arbitrary at $\alpha = 2b\kappa^2$ (or at $\kappa=\pm\kappa_0$), forming `vertical' bifurcating branches, cf.\ Figure \ref{f:amp}. For these solutions the wave shapes are sinusoidal $\phi_\s(\xi) = 2A_1\cos(\xi)\in N_0$, which is consistent with linear spaces of plane waves from \Cref{s:swe} with $\eta = f \phi_\s=0$ since $f=0$, where the amplitude of the velocity $\vh = \partial_\xi\phi_\s(\xi)\k^\perp = -2A_1\sin(\xi)\k^\perp$ is free. 
\begin{figure}[t!]
\centering
\subfigure[$\kappa=0$]{\includegraphics[trim = 5cm 8.5cm 5.5cm 8.5cm, clip, height=4cm]{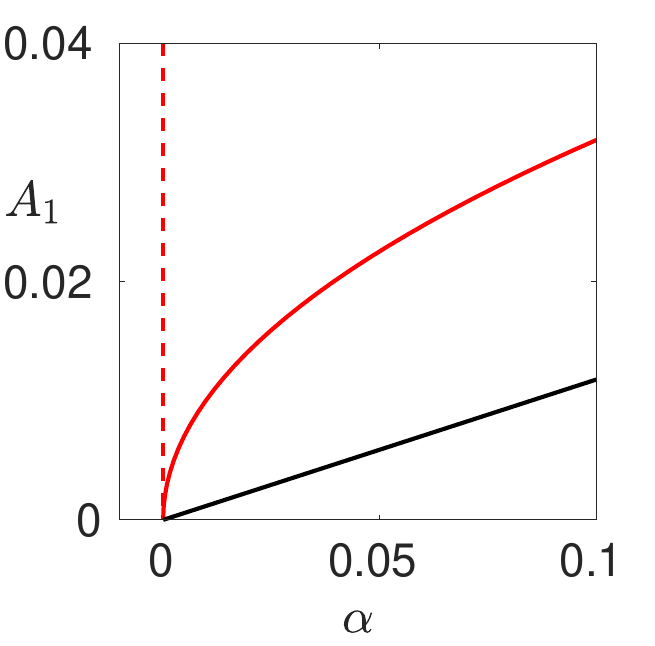}\label{f:ampalp}}
\hfil
\subfigure[$\alpha = 0.1$]{\includegraphics[trim = 5cm 8.5cm 5.5cm 8.5cm, clip, height=4cm]{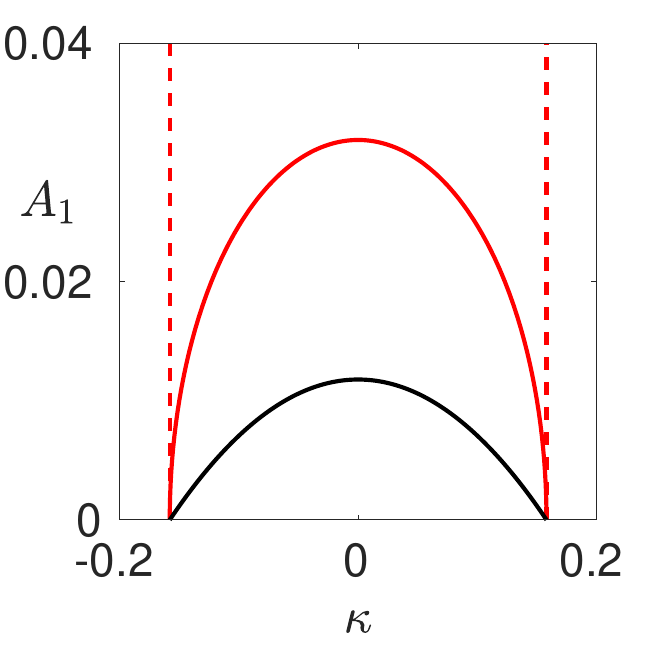}\label{f:ampkap}}
\caption{(Fig.~3 in \citep{PRY2023}.) Leading order amplitudes in the isotropic case $d_1=d_2=1$, $b_1=b_2=2$ for smooth ($Q=0$, red) and non-smooth ($Q=0.5$, black) cases. Other parameters are $g=9.8$, $H_0=0.1$ so that $\lb_\crit = 0.1$. In the smooth case, the amplitudes are plotted for $f=10$ (solid) and $f=0$ (dashed); in the non-smooth case, the amplitude is to leading order independent of $f$.}
\label{f:amp}
\end{figure}

\paragraph{Non-smooth case $Q\neq0$} 
As in the smooth case the bifurcation is always supercritical since the coefficient of $A_1|A_1|$ is negative and the zero state is unstable for $\alpha>2b\kappa^2$, although for $Q\neq0$ it is a degenerate pitchfork, where the bifurcating branch behaves linearly in $\alpha$ near zero, cf.\ Figure \ref{f:ampalp}. 
Different from the smooth case, the coefficient of $A_1|A_1|$ in \cref{e:bifQn0} is independent of Coriolis parameter $f$, which means that the `vertical branch' does not occur. The leading order amplitude of bifurcating solutions for $Q\neq 0$ is given by
\[
|A_1| = \frac{3\pi H_0 \sqrt{2d}}{16Q \sqrt b} \left( \alpha - 2b\kappa^2 \right), 
\]
which is parabolic in $\kappa$, cf.\ Figure \ref{f:ampkap}, and is inversely proportional to $Q$. In contrast to the smooth case, for equally small backscatter parameter $(d,b)=(\eps\hat d, \eps\hat b)$ as $\eps\to 0$, the prefactor is independent of $\eps$. 

\begin{rem}
As $Q\rightarrow0$ in \eqref{e:bifQn0} the nonlinear term in $|A_1|$ vanishes and the remainder term limits to $\calR_\ns = f\calO(|A_1|(|A_1|+|\alpha|+|\kappa|))+\calO(|\kappa|^3)$. 
Compared with the bifurcation equation \eqref{e:bifQ0} at $Q=0$ there may be additional terms of order $f\calO(|A_1|(|\alpha|+|\kappa|))$. 
We expect that it can be shown with a refined analysis that such terms do not occur, which would prove that the bifurcation equation \eqref{e:bifQn0} is continuous with respect to $Q$ at $Q=0$. 
\end{rem}

\subsection{Bifurcation of nonlinear gravity waves}\label{s:igw}
We recall from \Cref{s:spec} that in the isotropic case pattern forming stationary and oscillatory modes destabilize simultaneously. For weak anisotropy oscillatory modes destabilize slightly after the steady ones as $C$ decreases, and the bifurcations of GWs studied next are perturbed as shown numerically in \Cref{s:num}. 
For brevity, we consider only the non-smooth case $Q\neq 0$. 

To include deviations $\kappa$ from the critical wave number $k_\crit$ and $s$ from the critical wave speed $-\omega_\crit$ we extend \cref{e:HopfOp} by taking $\k_\crit=(\kcx,\kcy)$ satisfying \cref{e:isocritCk}, 
 change coordinates to $\zz,\tilde \zz\in \R$ via  
\begin{equation}\label{e:xphase}
\x=\big((1+\kappa)^{-1}\zz+st\big)/|\k_\crit|^2\;\k_\crit + \tilde \zz \k_\crit^\perp,
\end{equation}
and seek solutions that are independent of $\tilde \zz$ for $\mu=(\alpha,s,\kappa)\approx 0$. 
\begin{thm}[Bifurcation of GWs for $Q\neq 0$ \citep{PRY2023}]\label{t:bifIGWQn0}
Let $Q\neq0$, $\alpha,\kappa\in \R$ be sufficiently close to zero, and let $\k_\crit=(\kcx,\kcy)$ be arbitrary with $|\k_\crit|=k_\crit$. 
Consider $2\pi$-periodic steady travelling wave-type solutions $(\vh,\eta)$ to \cref{e:sw} with mean zero $\eta$ and $\zeta=(1+\kappa)^{-1}\zz-(\omega_\crit-s)t$. These are (up to spatial translations) in one-to-one correspondence with solutions $s, A_1$ near zero of 
\begin{subequations}\label{e:gwBifEqu}
\begin{align}
0 &= A_1\left(-s + \kappa \frac{f^2}{\omega_\crit} 
+ \calO(
|A_1|^2+|\mu|^2) 
\right), \label{e:gwBifEqua}\\
0&= A_1\left(\alpha 
- \frac{2 Q  k_\crit}{ H_0} \left( I_1 + \frac{k_\crit^2 g H_0}{2f^2+ k_\crit^2 g H_0} I_2\right)|A_1|+ 
\calO(
|A_1|^2+|\mu|^2) 
\right),\label{e:gwBifEqub}
\end{align}
\end{subequations}
with positive quantities
\begin{align}\label{e:gwCoefficient}
I_1 = \frac{1}{2\pi}\int_0^{2\pi}\sqrt{f^2 + k_\crit^2 g H_0 \cos(\zz)^2}\,\dif \zz,\quad
I_2 =  \frac{1}{2\pi}\int_0^{2\pi}\sqrt{f^2 + k_\crit^2 g H_0 \cos(\zz)^2}\cos(2\zz)\,\dif \zz.
\end{align}
Taking $\x$ as in \cref{e:xphase}, these waves have the form
\[
\begin{pmatrix}\vh\\\eta\end{pmatrix}(t,{\x})=
2A_1\begin{pmatrix}
\omega_\crit \kcx\cos\zeta-f\kcy\sin\zeta\\
\omega_\crit \kcy\cos\zeta + f\kcx\sin\zeta\\
k_\crit^2H_0\cos\zeta
\end{pmatrix}
+\calO(|A_1|(
|A_1|+|\mu|)).
\]
\end{thm}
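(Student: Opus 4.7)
The plan is to apply Lyapunov-Schmidt reduction in the comoving frame. Substituting \cref{e:xphase} into \cref{e:sw} and seeking solutions independent of $\tilde\zz$ yields an ODE system $F(U,\mu)=0$ for $U=(\vh,\eta)$ and $\mu=(\alpha,s,\kappa)$, naturally posed on a space of $2\pi$-periodic functions of $\zz$ with $\eta$ of zero mean (for instance, $H^4_{\textnormal{per}}\times H^4_{\textnormal{per}}\times H^1_{\textnormal{per},0}$). At $\mu=0$, $U=0$ the linearization coincides with $\calL_\crit$ from \cref{e:HopfOp}, and imposing zero mean on $\eta$ removes the mass-conservation mode $\ee_0$ so that the kernel is the two-dimensional complex span of $\ee_{\pm 1}$. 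Since $\calL_\crit$ is a constant-coefficient ODE operator on a compact interval, it is Fredholm of index zero.

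I would decompose $U=A\ee_1+\bar A\ee_{-1}+U^\perp$ with $U^\perp$ in the $L_2$-orthogonal complement of the kernel, and solve the range equation via a contraction argument to obtain $U^\perp = U^\perp(A,\mu)$ with $\|U^\perp\|=\calO(|A|(|A|+|\mu|))$. The translation symmetry $\zz\mapsto\zz+\theta$ of $F$ allows fixing $A=A_1\in\R$, whereupon the two scalar bifurcation equations \cref{e:gwBifEqua}, \cref{e:gwBifEqub} come from projecting the remaining equation onto the real and imaginary parts of $\ee_1$. The linear coefficients in $\mu$ -- namely $-1$ multiplying $s$ and $f^2/\omega_\crit$ multiplying $\kappa$ in \cref{e:gwBifEqua}, and $1$ multiplying $\alpha$ in \cref{e:gwBifEqub} -- are obtained by direct differentiation of the comoving dispersion relation $\disp_\c$ from \cref{e:dispcomov} at the critical point.

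The leading nonlinear coefficient in \cref{e:gwBifEqub} comes solely from the drag. Inserting the leading ansatz $\vh_0 = 2A_1(\omega_\crit\kcx\cos\zz - f\kcy\sin\zz,\ \omega_\crit\kcy\cos\zz + f\kcx\sin\zz)$, the vanishing of cross terms using $\kcx^2+\kcy^2=k_\crit^2$ together with $\omega_\crit^2-f^2 = k_\crit^2 g H_0$ gives the key identity
\begin{equation*}
|\vh_0|^2 = 4A_1^2 k_\crit^2\bigl(f^2+k_\crit^2 gH_0\cos^2\zz\bigr),
\end{equation*}
so that $|\vh_0| = 2|A_1|k_\crit\sqrt{f^2+k_\crit^2 gH_0\cos^2\zz}$. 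Expanding $(H_0+\eta_0)^{-1} = H_0^{-1}(1+\calO(|A_1|))$ and projecting $-Q|\vh_0|\vh_0/(H_0+\eta_0)$ onto $\ee_1$ reduces the relevant Fourier averages against $\cos\zz$ and $\sin\zz$ to $I_1$ and $I_2$ from \cref{e:gwCoefficient}, which combined with the eigenvector weights in $\E_{\pm 1}$ produce the announced combination $I_1+\tfrac{k_\crit^2 g H_0}{2f^2+k_\crit^2 g H_0}I_2$ with prefactor $2Qk_\crit/H_0$. The transport nonlinearity $(\vh\cdot\nabla)\vh$ contributes only at order $|A_1|^2 A_1$ and is absorbed in the remainder.

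The main obstacle is the non-smooth drag $|\vh|\vh/(H_0+\eta)$, which is not $C^1$ at $\vh = 0$ and thus does not fit the classical smooth implicit function framework. As for the GE case in Theorem~\ref{t:bifQ0}, I would isolate the positively $2$-homogeneous core $|\vh|\vh$, for which the projection onto $\ee_1$ is an explicit continuous homogeneous function of $A_1$, and bound the smoother remainder in a Lipschitz sense; the range equation is then solved by a uniform contraction on a small ball in $(A,\mu)$ rather than by the smooth implicit function theorem, yielding $U^\perp$ with the required order. A minor remaining point is the non-degeneracy of the Jacobian in $(s,\alpha)$ of the leading part of the reduced system, which is immediate from the coefficients $-1$ and $1$ derived above and delivers the claimed one-to-one correspondence.
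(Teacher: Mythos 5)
Your proposal is correct and follows essentially the same route the paper indicates for this result: a Lyapunov--Schmidt reduction on spaces of $2\pi$-periodic functions in the comoving frame, with the kernel spanned by $\ee_{\pm1}$ after removing the mass mode, linear coefficients read off from $\disp_{\c}$, the quadratic coefficient extracted from the positively $2$-homogeneous drag via the identity $|\vh_0|^2=4A_1^2k_\crit^2(f^2+k_\crit^2gH_0\cos^2\zz)$, and a contraction rather than the smooth implicit function theorem to handle the non-$C^2$ drag. The only place needing more care is your blanket claim that $\calL_\crit$ is Fredholm because it is a constant-coefficient ODE operator on a compact interval: this is exactly the non-standard point the paper flags (the eigenvalues $\lambda_\infty(\k)\to0$ preclude a spectral gap), and Fredholmness here instead relies on the comoving-frame shift $-\rmi\c\cdot\k$ moving that accumulating spectrum away from the origin along the imaginary axis for large wavenumbers, as established in the cited reference.
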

Here \cref{e:gwBifEqua} determines the deviation by $s$ from the travelling wave velocity and highlights wave dispersion, where these nonlinear GWs of different wave number travel at different velocity. The amplitude $|A_1|$ is determined by \cref{e:gwBifEqub}. Similar to GE, the bifurcation is always supercritical since the coefficient of $A_1|A_1|$ in \cref{e:gwBifEqub} is negative and the zero state is unstable for $\alpha>0$. 

\subsection{Numerical bifurcation analysis}\label{s:num}
We briefly illustrate and corroborate some of the analytical results by numerical continuation, cf.\ \citep{p2p,p2pbook}. Figure \ref{f:numbifiso} shows results for an isotropic case; Figure \ref{f:numbifaniso} for an anisotropic case. The supercritical bifurcations of GE and GWs occur as predicted and beyond the analysis, the branches extend to $\lb=0$, i.e.\ purely nonlinear bottom drag. Various bifurcation points are numerically detected along the branches, but are not shown.

Numerically, we can also study the stability of the bifurcating states. For isotropic backscatter, where steady and oscillatory modes are simultaneously critical, we find that the bifurcating solutions are all unstable (see Figure \ref{f:numbifiso}). This manifests already for purely $x$-dependent perturbations of the same wave number as the solutions, i.e.\ on the domain $[0,2\pi]$. Both the GE and GWs have a complex conjugate pair of unstable eigenvalues, where the unstable eigenfunction for GE has the shape of an GWs, and vice versa, suggesting that the instability stems from the interaction between GE and GWs. 

In Figure \ref{f:numbifaniso} we plot results for anisotropic backscatter, where GE bifurcate supercritically as predicted near $\lb=\lb_\crit=0.1$, and GWs at some smaller value of $\lb$.  
Interestingly, the bifurcating nonlinear GE appear to be spectrally stable against general 2D perturbations, at least  until $\lb\approx 0.01$. Here we compute the spectrum via a Floquet-Bloch transform in the $x$-direction which gives the Floquet-Bloch wave number parameter $\gamma_x\in[-\pi,\pi]$ and in the $y$-direction by Fourier transform with wave number $\gamma_y\in\R$. In Figure \ref{f:numbifaniso}(c,d) we plot the sidebands. Hence, it appears that combined backscatter and bottom drag can stabilize GE, so that backscatter not only induces the bifurcation of these, but also promotes a dynamic selection of geostrophically balanced states. 
\begin{figure}[t!]
\centering
\subfigure[]{\includegraphics[width=0.24\textwidth]{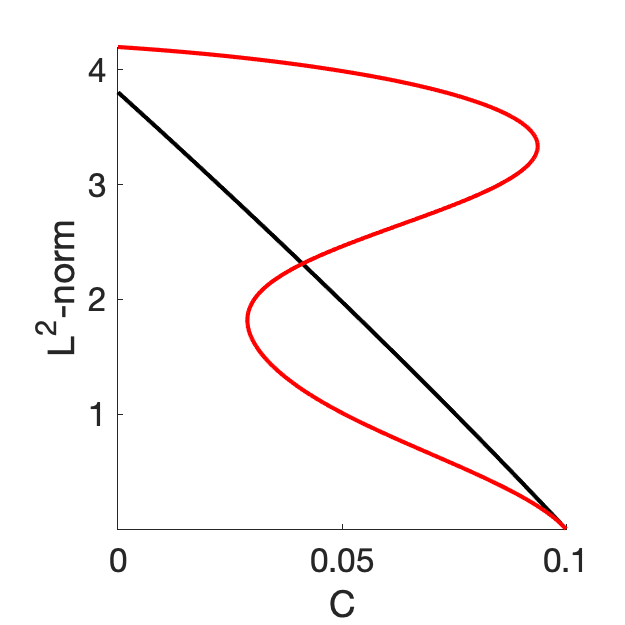}}
\hfil
\subfigure[]{\includegraphics[width=0.24\textwidth]{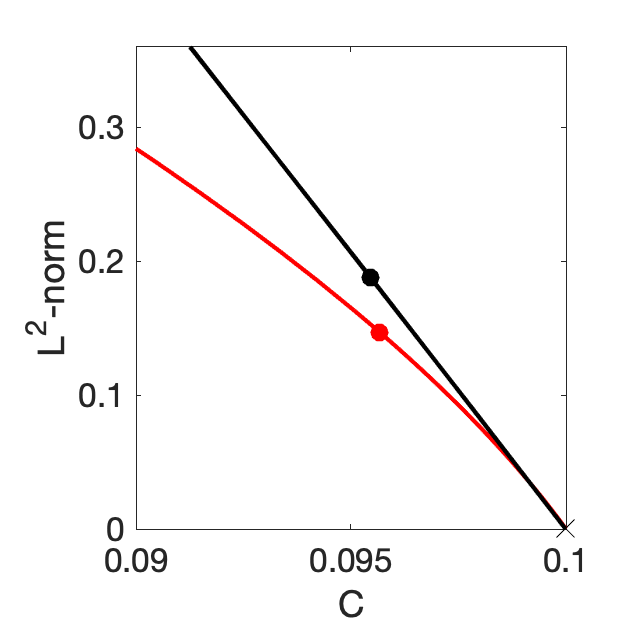}}
\hfil
\subfigure[]{\includegraphics[width=0.24\textwidth]{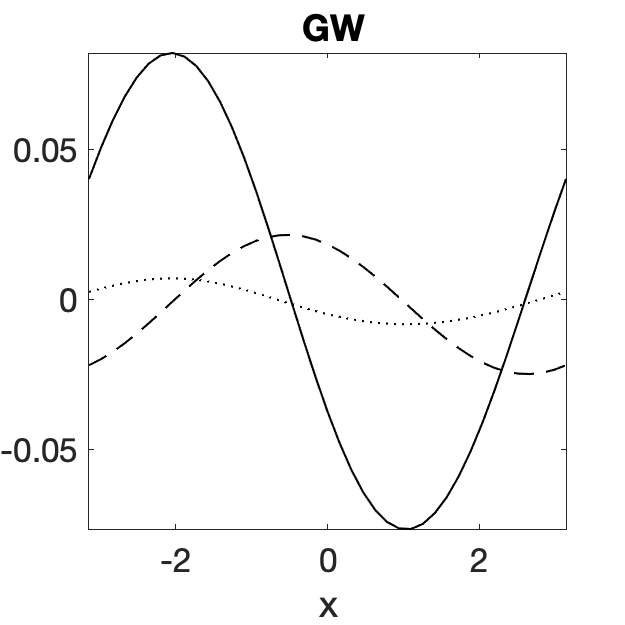}}
\hfil
\subfigure[]{\includegraphics[width=0.24\textwidth]{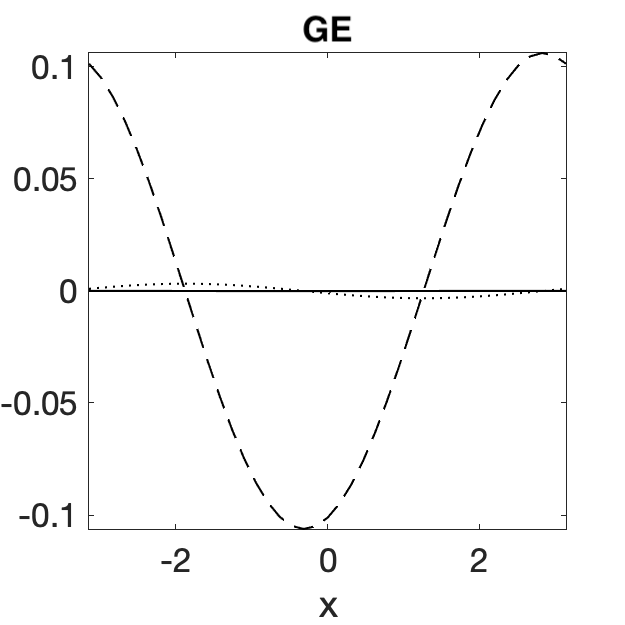}}
\caption{(Fig.~7 in \citep{PRY2023}.)
Numerical branches of nonlinear GE (black) and GWs (red) in the isotropic case $d_1=d_2=1$, $b_1=b_2=2$; (b) magnification of (a). Other parameters are $f=0.3$, $g=9.8$, $H_0=0.1$ so that $\lb_\crit=0.1$, and $Q=0.05$. 
Marked solutions are plotted in panels (c) and (d) with $u$ solid, $v$ dashed, $\eta$ dotted. 
}
\label{f:numbifiso}
\end{figure}

\begin{figure}[t!]
\centering
\subfigure[]{\includegraphics[width=0.24\textwidth]{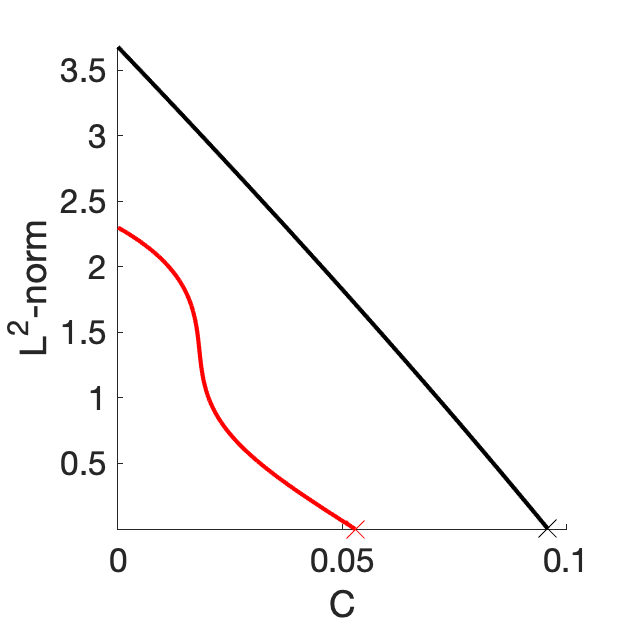}}
\hfil
\subfigure[]{\includegraphics[width=0.24\textwidth]{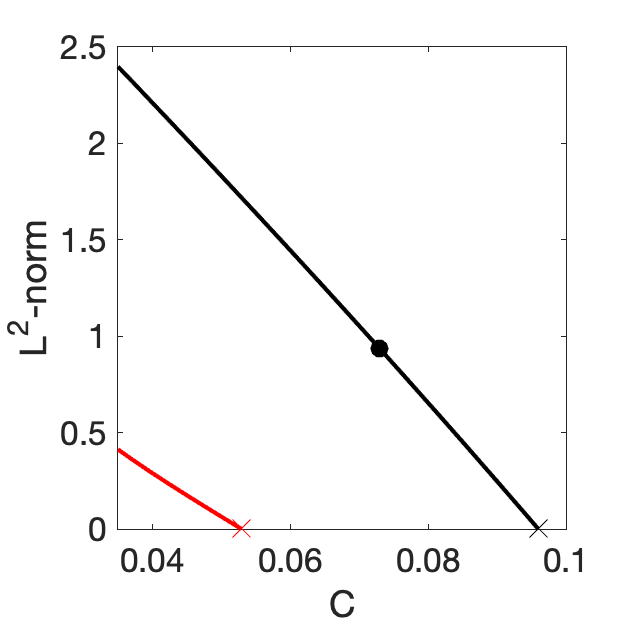}}
\hfil
\subfigure[]{\includegraphics[width=0.24\textwidth]{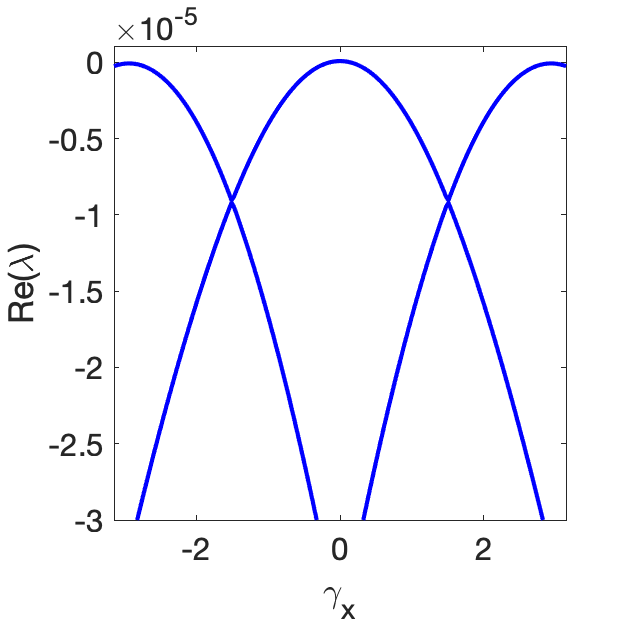}}
\hfil
\subfigure[]{\includegraphics[width=0.24\textwidth]{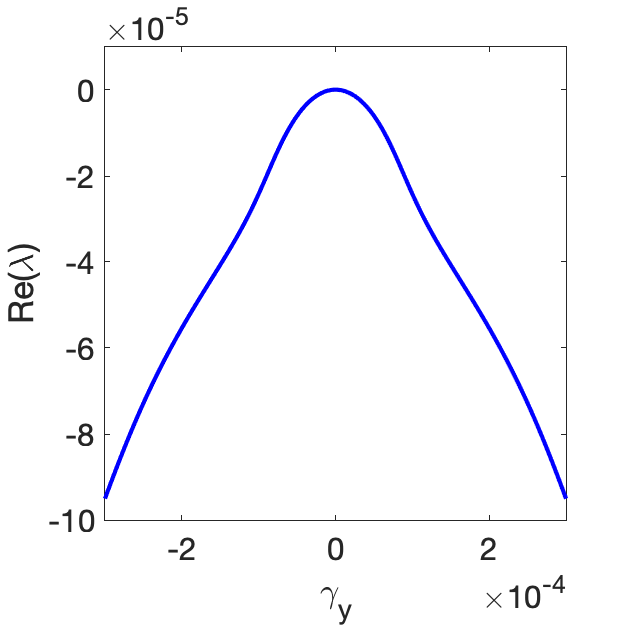}}
\caption{(Fig.~8 in \citep{PRY2023}.)
Numerical branches of nonlinear GE (black) and GWs (red) in the anisotropic case $d_1=1$, $d_2=1.04$, $b_1=1.5$, $b_2=2$; (b) magnification of (a). 
Other parameters are $f=0.3$, $g=9.8$, $H_0=0.1$ so that $\lb_\crit=0.1$, and $Q=0.05$. 
(c,d) relevant Floquet-Bloch spectra for $\gamma_x\in[-\pi,\pi]$, $\gamma_y\approx 0$ of the marked solution in (b).
}
\label{f:numbifaniso}
\end{figure}

\section{Discussion}\label{s:discussion}
Numerical kinetic energy backscatter schemes \citep{JH2014,DJKO2019,JDKO2019,juricke2020kinematic,Perezhogin20} aim to compensate for energy losses due to unresolved processes and unphysically high dissipation and viscosity needed for stable numerical simulation. We have reviewed the main results of \citep{PRY22,PRY2023} concerning the impact of such schemes in idealized settings via classes of explicit solutions to fluid equations augmented with backscatter on the continuum level. In addition to the Boussinesq and rotating shallow water equations, we have included the 3D primitive equations and, as a subsystem in all of these, the 2D Euler equations. Parameterizations more generally are common practice in geophysical and climate modeling to compensate unresolved effects \citep{TRRBook}, but little is understood about the impact mathematically.  

On the continuum level it is possible to study the modification of the dispersion relation of the trivial zero state by horizontal kinetic energy backscatter explicitly. As expected, on unbounded domains backscatter creates a band of exponentially growing large scale modes, while on the torus or other bounded domains, such destabilization occurs for sufficiently large domains or negative viscosity coefficient. Less immediately expected are certain explicit plane wave type solutions that are steady in the inviscid case, but grow exponentially and unboundedly with backscatter, highlighting undesired energy concentration through the backscatter. 
Due to superposition principles, such an unbounded instability is also induced for otherwise steady  plane wave type solutions themselves. We also reviewed results for the inclusion of linear and nonlinear bottom drag as an additional damping mechanism, which show that moderate linear damping does not suppress this phenomenon and nonlinear damping yields selection mechanisms by bifurcations and stabilization. 

As a new result, we have proven that for the 2D Euler equations unbounded exponential growth occurs in open regions of phase space, 
if the ratio of negative viscosity and hyperviscosity coefficients is not too large{\blue, thus preventing resonances}. This is inherited by the other models as relatively open sets on certain subspaces in phase space. From a more mathematical viewpoint, it would be interesting {\blue to see} whether this {\blue stability} persists for general backscatter coefficients, 
and to extend this type of result to other equations. 

{\blue The fact} that backscatter induces some growth and destabilization is no surprise{\blue, but  that it} occurs as unbounded growth and {\blue in a} persistent {\blue manner} is not obvious. {\blue This may help explain o}bservations of persistent structures in some backscatter simulations \citep{JurickePersonal}, and difficulties to stably simulate with backscatter, e.g., \citep{Guan2022,juricke2020kinematic} and the references therein. Heuristically, {\blue persistent} growth is possible since the energy transfer from large to small scales is not strong enough {\blue to compensate the} energy pumped into the system on large scales.

\medskip
It would be interesting to study analytically and numerically in what way the undesired growth occurs in numerical discretizations. For fixed backscatter coefficients but decreasing grid size, the solutions of the discrete system are expected to track those of the continuum over increasingly long time intervals. Hence, the presence of unbounded growth in the limit is expected to readily imply large, but possibly finite growth for any fixed small grid size.  Notably, in the numerical backscatter scheme, the backscatter coefficients are scaled essentially proportional to the grid size. 
{\blue For a given discretization scheme,} it would thus be interesting to identify the relation between the growth of flows and the grid-scaling of backscatter coefficients. 
This may provide another approach to energetic consistency of backscatter schemes. 

In contrast to the unbounded growth, the bifurcations that were found can be expected to persist under discretization, including stability on finite domains; stability of the bifurcating solutions for large domains is subtle and would be interesting to pursue further. We recall that the shallow water equation analysis is hampered by the lack of spectral gap, which does not appear in the viscous case and suggests lack of smoothing despite hyperviscosity. 

This is one motivation to take a more mathematical perspective, and consider well-posedness of PDE augmented with backscatter. In this chapter, we outlined some results in this direction with focus on the primitive equations, which are prominent in applications. Concerning this perspective, we note that the Fourier symbol of horizontal kinetic backscatter  in the horizontal momentum equation is that of the famous Kuramoto-Sivashinsky equations. However, the mathematical analysis differs compared to recent studies of this in higher space dimensions \citep{FM22,FSW22,KM2023}. 
Very recently \citep{KornTiti} prove well-posedness for another parameterization and suggest to use well-posedness as a criterion for sensible parameterizations. 

\bigskip
\textbf{Acknowledgement.} 

This paper is a contribution to the projects M1 and M2 of the Collaborative Research Centre TRR 181 ``Energy Transfers in Atmosphere and Ocean" funded by the Deutsche Forschungsgemeinschaft (DFG, German Research Foundation) under project number 274762653. The authors acknowledge the major contributions of Artur Prugger to the work we have reviewed. The authors thank Stephan Juricke and Marcel Oliver for discussions, and  Ekaterina Bagaeva for comments on \S\ref{s:intro} and \S\ref{s:modeling}. 
P.H. thanks Edriss Titi for the hospitality during a research visit and the inspiration for Theorem~\ref{thm:stabgrow}.

\bibliographystyle{apalike}
\bibliography{ReferencesGAFDpaper,ReferencesSIADSpaper,References,ReferencesIMApaper}

\end{document}